\DeclareMathAlphabet{\mathpzc}{OT1}{pzc}{m}{it}
\title{
  Successive finite  element methods for Stokes equations
}\author{ Chunjae Park \thanks{Department of
    Mathematics, Konkuk University, Seoul 05029, Korea.
    \hspace{1mm}cjpark@konkuk.ac.kr}}
\begin{document}

\newtheorem{theorem}{Theorem}[section]
\newtheorem{remark}[theorem]{Remark}
\newtheorem{lemma}[theorem]{Lemma}
\newtheorem{corol}[theorem]{Corollary}
\newtheorem{proposition}[theorem]{Proposition}
\newtheorem{definition}[theorem]{Definition}
\newtheorem{assumption}{Assumption}[section]

\newcommand{\st}[2]{\mathcal{S}_{#1#2}} \newcommand{\ns}[2]{\mathcal{N}_{#1#2}}
\newcommand{\dotone}[1]{\dot{#1}}
\newcommand{\dottwo}[1]{\ddot{#1}}
\newcommand{\dotthr}[1]{\dddot{#1}}

\newcommand{\vertiii}[1]{{\left\vert\kern-0.25ex\left\vert\kern-0.25ex\left\vert #1 
    \right\vert\kern-0.25ex\right\vert\kern-0.25ex\right\vert}}

\def\disp{\displaystyle}
\def\pskip{\hspace{1pt}}
\def\mmskip{\vspace{1mm}}

\def\R{\mathbb R}
\def\O{\Omega}
\def\p{\partial}
\def\Th{\mathcal{T}_h}

\def\div{\mathrm{div}\hspace{0.5mm}}
\def\curl{\mathbf{curl}\hspace{0.5mm}}

\def\alp{\alpha}
\def\bet{\beta}
\def\gam{\gamma}
\def\del{\delta}
\def\lam{\lambda}
\def\th{\theta}

\def\f{\mathbf{f}}
\def\u{\mathbf{u}}
\def\v{\mathbf{v}}
\def\w{\mathbf{w}}
\def\x{\mathbf{x}}
\def\y{\mathbf{y}}
\def\z{\mathbf{z}}
\def\n{\mathbf{n}}
\def\btau{\boldsymbol{\tau}}
\def\bnu{\boldsymbol{\nu}}
\def\bxi{\boldsymbol{\xi}}

\def\C{\mathbf{C}}
\def\M{\mathbf{M}}
\def\G{\mathbf{G}}
\def\P{\mathbf{P}}
\def\V{\mathbf{V}}
\def\W{\mathbf{W}}
\def\X{\mathbf{X}}
\def\Y{\mathbf{Y}}

\newcommand {\snorm}[2] {| #1 |_{#2}}
\newcommand {\norm}[2] {\| #1 \|_{#2}}
\newcommand{\Evec}[2]{\overrightarrow{#1#2}}
\newcommand{\verthrnorm}[1]{{\left\vert\kern-0.25ex\left\vert\kern-0.25ex\left\vert #1 
        \right\vert\kern-0.25ex\right\vert\kern-0.25ex\right\vert}}

\def\tu{\widetilde{\u}} \def\t{\boldsymbol{\tau}}
\def\bxi{\boldsymbol{\xi}}

\def\Xh{[\mathcal{P}_h^4(\O)\cap H_0^1(\O)]^2}
\def\Mh{\mathcal{P}_h^3(\O)}
\def\ph{p_h}
\def\php{\Pi_h p}
\def\jump{\mathcal{J}}
\def\VV{\mathcal{V}}
\def\RR{\mathcal{R}}
\def\SS{\mathcal{S}}
\def\PP{\mathcal{P}}
\def\BB{\mathcal{B}}
\def\KK{\mathcal{K}}

\def\m{\mathpzc{m}}
\def\CC{\mathpzc{C}}

\def\jjp{j\hspace{1pt}j+1}
\maketitle
\begin{abstract}
  This paper  will suggest a new finite element method to find a $P^4$-velocity
  and a $P^3$-pressure solving Stokes equations.
  The method solves first the decoupled equation for the $P^4$-velocity.
  Then, four kinds of local $P^3$-pressures
  and one $P^0$-pressure will be calculated in a successive way. 
  If we superpose them, the resulting $P^3$-pressure shows the optimal order of
  convergence same as a $P^3$-projection.
  The chief time cost of the new method is on solving two linear systems for the
 $P^4$-velocity and $P^0$-pressure, respectively.
 \end{abstract}

 \section{Introduction}
 High order finite element methods for incompressible Stokes problems
 have been developed well in 2-dimensional domain and analyzed along to the inf-sup condition
 \cite{Falk2013, Guzman2018, Scott1985}.
 They, however, endure their large numbers of degrees of freedom.
 To be worse in case of Scott-Vogelius, instability appears on pressure,
 if the mesh bears singular vertices.

 Recently, we found a
 so called sting function having an interesting quadrature rule.
 In the Scott-Vogelius finite element method, it causes the discrete Stokes problem
 to be singular on the presence of exactly singular vertices.
 Even on nearly singular vertices, the pressure solution is easy to be spoiled.
 To overcome the problem based on understanding the causes,
 we did a new error analysis in a successive way 
 and restored the ruined pressure by simple post-process driven from it
 \cite{Park2020}. 

 In this paper, we will suggest a new finite element method
 to calculate a pressure solution at low cost,
 utilizing the successive way in the precedented new error analysis.
 In our method, the characteristics of a sting function
 depicted in Figure \ref{fig:sting} will also play a key role.
 
 We will solve first the decoupled equation for velocity in the $P^4$ divergence-free space
 inherited from the $\mathcal{C}^1$-Argyris $P^5$ stream function space. 
 It is simpler and smaller than the divergence-free subspace of the $P^4$-$P^3$
 Scott-Vogelius finite element space.

 The main stage of our method is the successive 5 steps calculating
 four kinds of local $P^3$-pressures for triangles, regular vertices, nearly singular vertices
 and singular corners, respectively, as well as one $P^0$-pressure in the last step. 
 They are successive in the sense that each step needs the calculated
 in the previous step.
 
 Superposition of all the calculated reaches at the final $P^3$-pressure
 which shows the optimal order of convergence
 same as a $P^3$-projection of the continuous pressure.
 The chief time cost of the new method is on solving two linear systems for the
 $P^4$-velocity and $P^0$-pressure, respectively.

The paper is organized as follows.
In the next section, the detail on finding a $P^4$-velocity  will be offered. 
After short review of nearly singular vertices in Section \ref{sec:def-sing},
we will introduce a new basis for $P^3$-pressures consisting of
non-sting, sting and constant functions and decompose the space of
$P^3$-pressures in Section \ref{sec:basis}. 
Then, we will devote Section \ref{sec:main} to describing the successive 5 steps
to find a $P^3$-pressure solution.
Finally, a numerical test will be presented in the last section.

Throughout the paper, for a set $S \subset \mathbb R^2$,
standard notations for Sobolev spaces are employed and
$L_0^2(S)$ is the space of all $f\in L^2(S)$ whose integrals over $S$ vanish.
We will use $\|\star\|_{m,S}$, $|\star|_{m,S}$ and  $(\cdot,\cdot)_S$
for the norm, seminorm for $H^m(S)$ and  $L^2(S)$  inner product, respectively. 
If $S=\O$, it may be omitted in the indices.
Denoting by $P^k$, the space of all polynomials on $\R^2$
of order less than or equal $k$,
$f\big|_S\in P^k$ will mean that $f$ coincides with a polynomial in $P^k$ on $S$.

\section{Velocity from the decoupled equation}
Let $\O$ be a simply connected polygonal domain in $\R^2$ and $\{\Th\}_{h>0}$ a regular family of triangulations of $\O$. 
Denote by $\mathcal{P}_h^k(\O)$, discrete polynomial spaces:
\[\mathcal{P}_h^k(\O) =\{ v_h \in L^2(\O)\ : \ v_h\big|_K \in P^k
  \mbox{ for all triangles } K\in\Th \}, \quad k \ge 0.\]
In this paper, we will approximate a pair of velocity and pressure
$(\u,p)\in [H_0^1(\O)]^2 \times L_0^2(\O)$ which satisfies
an incompressible Stokes problem:
\begin{equation}\label{prob:conti}
  ( \nabla \u,\nabla \v) +(p,\div \v)+(q,\div \u)
  = (\mathbf{f},\v) \quad \mbox{ for all } (\v,q) \in [H_0^1(\O)]^2\times  L_0^2(\O),
\end{equation}
for a given body force $\mathbf{f}\in [L^2(\O)]^2$.

Let $A_h^5$ be a space of $\mathcal{C}^1$-Argyris triangle elements
\cite{Brenner2002, Ciarlet} such that
\begin{equation*}\label{def:Argyris}
  A_h^5=\mathcal{P}_h^5(\O) \cap H_{0}^2(\O),
\end{equation*}
where
\[ H_{0}^2(\O)=\{ \phi\in H^2(\O)\ :\ \phi, \phi_x, \phi_y \in H_0^1(\O)\}.\]

Defining a divergence-free space $V_h$ as
\begin{equation*}
  \label{eq:def_Vh}
  V_h=\{ (\phi_{h,y},-\phi_{h,x})\  \ :\ \phi_h\in A_h^5\hspace{1pt} \} \subset
  [\mathcal{P}_h^4(\O)\cap H_0^1(\O)]^2,
\end{equation*}
we can solve  $\u_h\in V_h$ satisfying
\begin{equation}\label{eq:dclv-SC}
  (\nabla \u_h, \nabla \v_h)= (\f,\v_h)\quad \mbox{ for all } \v_h\in V_h.
\end{equation}
\begin{theorem}\label{th:vel-error}
  Let $(\u,p)\in [H_0^1(\O)]^2 \times L_0^2(\O)$ satisfy  \eqref{prob:conti}. 
  If $\u\in [H^5(\O)]^2$, then
  \begin{equation}\label{eq:th:vel-error-0}
    |\u -\u_h|_1 \le Ch^4 |\u|_5.
  \end{equation}
\end{theorem}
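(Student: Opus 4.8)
The plan is to reduce the estimate \eqref{eq:th:vel-error-0} to a standard C\'ea-type argument combined with an approximation result for the Argyris stream-function space. Since $\u_h\in V_h$ solves \eqref{eq:dclv-SC} and every $\v_h\in V_h$ is divergence-free, $\u_h$ is the orthogonal projection of $\u$ onto $V_h$ in the $|\cdot|_1$-seminorm: testing \eqref{prob:conti} against $(\v_h,0)$ with $\v_h\in V_h\subset[H_0^1(\O)]^2$ kills the pressure term because $\div\v_h=0$, giving $(\nabla\u,\nabla\v_h)=(\f,\v_h)$, and subtracting \eqref{eq:dclv-SC} yields the Galerkin orthogonality $(\nabla(\u-\u_h),\nabla\v_h)=0$ for all $\v_h\in V_h$. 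Hence $|\u-\u_h|_1=\inf_{\v_h\in V_h}|\u-\v_h|_1$, and it remains to exhibit one good $\v_h$.

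The natural candidate comes from the stream function. Since $\O$ is simply connected and $\u\in[H_0^1(\O)]^2$ is divergence-free, there is a stream function $\phi\in H^2(\O)$ with $\u=(\phi_y,-\phi_x)$; the boundary condition $\u=\mathbf 0$ on $\p\O$ forces $\phi$ and its tangential and normal derivatives to be constant along $\p\O$, so (after normalizing) $\phi\in H_0^2(\O)$, and $\u\in[H^5(\O)]^2$ gives $\phi\in H^6(\O)$ with $|\phi|_6\le C|\u|_5$. Let $\phi_h=I_h\phi\in A_h^5$ be the Argyris interpolant of $\phi$; then $\v_h:=(\phi_{h,y},-\phi_{h,x})\in V_h$. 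The standard Argyris interpolation error estimate (see \cite{Brenner2002, Ciarlet}) on a regular family of triangulations gives $|\phi-\phi_h|_2\le Ch^4|\phi|_6$, and since $|\u-\v_h|_1\le C|\phi-\phi_h|_2$ (the components of $\nabla(\u-\v_h)$ are, up to sign, second derivatives of $\phi-\phi_h$), we conclude
\[
  |\u-\u_h|_1\le |\u-\v_h|_1\le C|\phi-\phi_h|_2\le Ch^4|\phi|_6\le Ch^4|\u|_5,
\]
which is \eqref{eq:th:vel-error-0}.

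The only genuine subtlety is the passage from $\u\in[H^5(\O)]^2$ to a stream function $\phi\in H^6(\O)$ with $|\phi|_6\lesssim|\u|_5$ \emph{and} $\phi\in H_0^2(\O)$: one must check that the homogeneous Dirichlet data on $\u$ indeed transfer to vanishing zeroth- and first-order data on $\phi$ (this uses that $\p\O$ is a connected polygonal curve, so the tangential derivative of $\phi$ vanishing plus $\phi$ being fixed at one point forces $\phi\equiv 0$ on $\p\O$, and the normal derivative equals a component of $\u$), and that the gain of one derivative $\u\in H^5\Rightarrow\phi\in H^6$ holds on the polygonal domain — this is immediate since $\phi_x=-\int u_2$-type relations are not needed; rather $\nabla\phi=(-u_2,u_1)\in[H^5(\O)]^2$ directly gives $\phi\in H^6(\O)$ once $\phi\in H^1$, with the seminorm bound. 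Everything else — Galerkin orthogonality, the best-approximation identity, and the Argyris estimate — is routine, so I expect the write-up to be short.
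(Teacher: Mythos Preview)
Your proposal is correct and follows essentially the same route as the paper: Galerkin orthogonality on $V_h$ (which the paper writes out as $(\nabla\u-\nabla\u_h,\nabla\v_h)=0$), existence of a stream function $\phi\in H_0^2(\O)\cap H^6(\O)$ for the divergence-free $\u$, and the $O(h^4)$ Argyris approximation estimate in $|\cdot|_2$ to bound $|\u-\v_h|_1$. The only cosmetic differences are that the paper speaks of a ``projection'' $\Pi_h\phi$ rather than the interpolant and finishes by testing with $\v_h=\Pi_h\u-\u_h$ instead of invoking the best-approximation identity explicitly.
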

\begin{proof}
  If $(\u,p)\in [H_0^1(\O)]^2 \times L_0^2(\O)$ satisfies \eqref{prob:conti}, we have
  $\div\u=0$. Thus, if $\u\in [H^5(\O)]^2$,  
  there exists a stream function $\phi\in H_0^2(\O)\cap H^6(\O)$ such that \cite{GR}
  \[  \u=(\phi_y, -\phi_x).\]
  Let $\Pi_h\phi\in A_h^5$ be the projection of $\phi$ and
  $\Pi_h\u=\left((\Pi_h\phi)_y, -(\Pi_h\phi)_x\right)\in V_h$. Then we have
  \begin{equation}\label{eq:th:vel-error-1}
    |\phi-\Pi_h\phi|_2 \le C h^4 |\phi|_6,\quad |\u-\Pi_h\u|_1 \le Ch^4 |\u|_5.
  \end{equation}

  We have from \eqref{prob:conti}, \eqref{eq:dclv-SC} that
\[ (\nabla\u-\nabla\u_h,\nabla\v_h)=0\quad\mbox{ for all } \v_h\in V_h.\]
It is written in the form:
\begin{equation}\label{eq:th:vel-error-2}
  (\nabla\Pi_h\u-\nabla\u_h,\nabla\v_h)=(\nabla\Pi_h\u_h -\nabla\u,\nabla\v_h)
  \quad\mbox{ for all } \v_h\in V_h.
\end{equation}
We establish
\eqref{eq:th:vel-error-0} from \eqref{eq:th:vel-error-1}, \eqref{eq:th:vel-error-2}
with $\v_h=\Pi_h\u-\u_h\in V_h$.
\end{proof}

\section{Nearly singular vertex}\label{sec:def-sing}
\def\vts{\vartheta_{\sigma}}
A vertex $\V$ is called exactly singular if the union of all edges sharing $\V$
belongs to the union of two lines.
To be precise, let $K_1, K_2,\cdots,K_J$ be all $J$ triangles sharing $\V$
and denote by $\theta(K_k)$, the angle of $K_k$ at $\V$, $k=1,2,\cdots,J$. 
Define
\[\Upsilon(\V)=\{\theta(K_i) + \theta(K_j)\ : \ K_i\cap K_j \mbox{ is an edge, } i,j=1,2,\cdots,J \}. \]
Then $\Upsilon(\V)=\{\pi\} \mbox{ or } \emptyset$ if and only if $\V$ is exactly singular.

Since $\{\Th\}_{h>0}$ is regular, there exists $\vartheta>0$ such that
\[ \vartheta=\inf\{ \theta\ :\  \theta \mbox{ is an angle of a triangle } K\in\Th,
  h>0  \},  \]
which  depends on
the shape regularity parameter  $\sigma$ of $\{\Th\}_{h>0}$. 
Set
\begin{equation*}\label{def:vts}
  \vartheta_{\sigma} =\min(\vartheta, \pi/6),
\end{equation*}
then call a vertex $\V$ to be nearly singular if $\Upsilon(\V)=\emptyset$ or
\[  |\Theta -\pi| < \vts \mbox{ for all } \Theta\in\Upsilon(\V),   \]
otherwise regular. From the following lemma \cite{Park2020}, we note that
each interior nearly singular vertex is isolated from others.
\begin{lemma}\label{lem:isol-vtx}
  There is no interior edge connecting two nearly singular vertices.
\end{lemma}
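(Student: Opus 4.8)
The plan is to argue by contradiction using an angle-counting argument at the two endpoints of a hypothetical offending edge. Suppose $e$ is an interior edge joining two nearly singular vertices $\V_1$ and $\V_2$, and let $K$ and $K'$ be the two triangles sharing $e$. Write $\alpha_i$ for the angle of $K$ at $\V_i$ and $\alpha_i'$ for the angle of $K'$ at $\V_i$, $i=1,2$. Since $K$ and $K'$ are genuine triangles, $\alpha_1+\alpha_2<\pi$ and $\alpha_1'+\alpha_2'<\pi$; moreover each of these four angles is at least $\vartheta\ge\vts$ by shape regularity. Because $\V_1$ and $\V_2$ are nearly singular, the edge-sum $\alpha_1+\alpha_1'$ (which lies in $\Upsilon(\V_1)$ unless $\Upsilon(\V_1)=\emptyset$) satisfies $|\alpha_1+\alpha_1'-\pi|<\vts$, and likewise $|\alpha_2+\alpha_2'-\pi|<\vts$.

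Next I would add the two near-straightness conditions to get $|(\alpha_1+\alpha_2)+(\alpha_1'+\alpha_2')-2\pi|<2\vts$, hence $(\alpha_1+\alpha_2)+(\alpha_1'+\alpha_2')>2\pi-2\vts$. On the other hand the two triangle-inequality bounds give $(\alpha_1+\alpha_2)+(\alpha_1'+\alpha_2')<2\pi$, so each of the two sums $\alpha_1+\alpha_2$ and $\alpha_1'+\alpha_2'$ must individually exceed $\pi-2\vts$. Since $\vts\le\pi/6$, this forces $\alpha_1+\alpha_2>\pi-\pi/3=2\pi/3$, and similarly for the primed angles. Now bring in the lower bounds: $\alpha_2\ge\vts$ and $\alpha_2'\ge\vts$ would then already push $\alpha_1<\pi-\alpha_2$ close to but below $\pi$, which is not yet a contradiction by itself — so the argument needs one more squeeze.

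The cleaner route, which I would actually carry out, is to look at the remaining angles of $K$ and $K'$. In triangle $K$ the third angle is $\pi-\alpha_1-\alpha_2<\pi-(2\pi-2\vts-(\alpha_1'+\alpha_2'))$; combining with $\alpha_1'+\alpha_2'<\pi$ this third angle is less than $2\vts\le\pi/3$ — fine — but combining instead with the near-straightness at $\V_1$ and $\V_2$ applied to the edge $e$ itself one gets $\alpha_1+\alpha_2 = (\alpha_1+\alpha_1') + (\alpha_2+\alpha_2') - (\alpha_1'+\alpha_2') > (\pi-\vts)+(\pi-\vts) - \pi = \pi - 2\vts$ and simultaneously $\alpha_1'+\alpha_2' > \pi-2\vts$, whence the two third angles of $K$ and $K'$ are each below $2\vts\le\pi/3$. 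That still must be reconciled with shape regularity $\vartheta\le$ every angle and $\vts=\min(\vartheta,\pi/6)$: if $\vartheta\le\pi/6$ then $\vts=\vartheta$ and the third angles are below $2\vartheta$, again not immediately absurd. The genuine obstruction therefore is purely combinatorial-geometric and I expect to close it by also using the \emph{other} edges at $\V_1$: the two edges of $K$ and $K'$ emanating from $\V_1$ other than $e$ form, together with $e$, part of the fan around $\V_1$, and the near-singularity of $\V_1$ constrains \emph{all} consecutive angle-pairs in that fan, not just $\alpha_1+\alpha_1'$; pairing these constraints around both $\V_1$ and $\V_2$ and using that $e$ is shared forces two of the four triangle angles at an endpoint to sum to something $\ge\pi$, contradicting that a triangle's angles sum to exactly $\pi$ with a positive third angle bounded below by $\vts$.

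The main obstacle is exactly this last step: turning the ``near-straight at both ends of a shared edge'' hypothesis into a strict contradiction requires carefully exploiting that the \emph{same} edge $e$ appears in both fans, so that the angle budgets at $\V_1$ and at $\V_2$ cannot be satisfied simultaneously once $\vts\le\pi/6$. I would isolate this as the crux, do the bookkeeping with the four angles $\alpha_1,\alpha_2,\alpha_1',\alpha_2'$ and the two third angles, and invoke $\vts\le\pi/6$ at the very end to make the chain of inequalities strict; everything else (shape regularity giving uniform lower bounds on angles, the definition of $\Upsilon$ and of nearly singular, the elementary triangle angle sum) is routine and quoted from the setup above.
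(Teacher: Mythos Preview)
The paper does not actually prove this lemma; it is quoted from \cite{Park2020}. So there is no in-paper proof to compare against, and I will just assess your argument on its own.

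Your setup is correct and you are one short step from a complete proof, but you never close it --- the last two paragraphs circle the contradiction without landing on it, and then reach for unnecessary extra structure (the rest of the fans at $\V_1$ and $\V_2$). The missing observation is simply that shape regularity bounds \emph{every} angle from below by $\vartheta\ge\vts$, including the \emph{third} angle of each of $K$ and $K'$ (the angle opposite the shared edge $e$), not only the four angles $\alpha_1,\alpha_2,\alpha_1',\alpha_2'$ at the endpoints. Hence
\[
\alpha_1+\alpha_2 \;\le\; \pi-\vartheta \;\le\; \pi-\vts,
\qquad
\alpha_1'+\alpha_2' \;\le\; \pi-\vts,
\]
so $(\alpha_1+\alpha_2)+(\alpha_1'+\alpha_2')\le 2\pi-2\vts$. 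But you already derived the strict lower bound $(\alpha_1+\alpha_2)+(\alpha_1'+\alpha_2')>2\pi-2\vts$ from near-singularity at both endpoints. That is the contradiction, and the proof is finished.

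Your detour through ``the two third angles are each below $2\vts$'' stalls precisely because you fed in the weak bound $\alpha_1'+\alpha_2'<\pi$ instead of the sharp bound $\alpha_1'+\alpha_2'\le\pi-\vts$; with the sharp bound used on both triangles simultaneously the argument is a two-line angle count and no further combinatorics is needed.
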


\section{Decomposition of $\PP_h^3(\O)$}\label{sec:basis}
For each triangle $K\in\Th$, define 
\[ P^3(K)=\{ q_h\in \mathcal{P}_h^3(\O) :\ q_h=0  \mbox{ on } \O\setminus K  \}.\]
In the remaining of the paper, we will use the following notations:
\begin{itemize}
\item[] $C_\sigma$ : a generic constant which depends only the shape regularity
  parameter $\sigma$ of $\{\Th\}_{h>0}$,
\item[]  $\KK(\V)$ : the union of all triangles in $\Th$ sharing a vertex $\V$ as in Figure
  \ref{fig:union}-(b),
\item[] $\VV_h$ : the set of all vertices in $\Th$,
\item[] $|K|,|E|$: the area and length of a triangle $K$ and an edge $E$,  respectively.
\end{itemize}
  
\subsection{sting  function}
Let $\V$ be a vertex of a triangle $K$. Then there exists a unique
function $\st{\V}{K}\in P^3(K)$ satisfying the following
quadrature rule:
\begin{equation}\label{eq:quadst}
  \int_K \st{\V}{K}(x,y) q(x,y)\ dxdy = |K|q(\V)\quad \mbox{ for all } q\in
  P^3,
\end{equation}
since the both sides of \eqref{eq:quadst} are linear functionals on $P^3$.  We call
$\st{\V}{K}$ a sting function of $\V$ on $K$, named after the shape of its graph
as in Figure \ref{fig:sting}.
\begin{figure}[ht]
  \centering
  \includegraphics[width=0.52\linewidth]{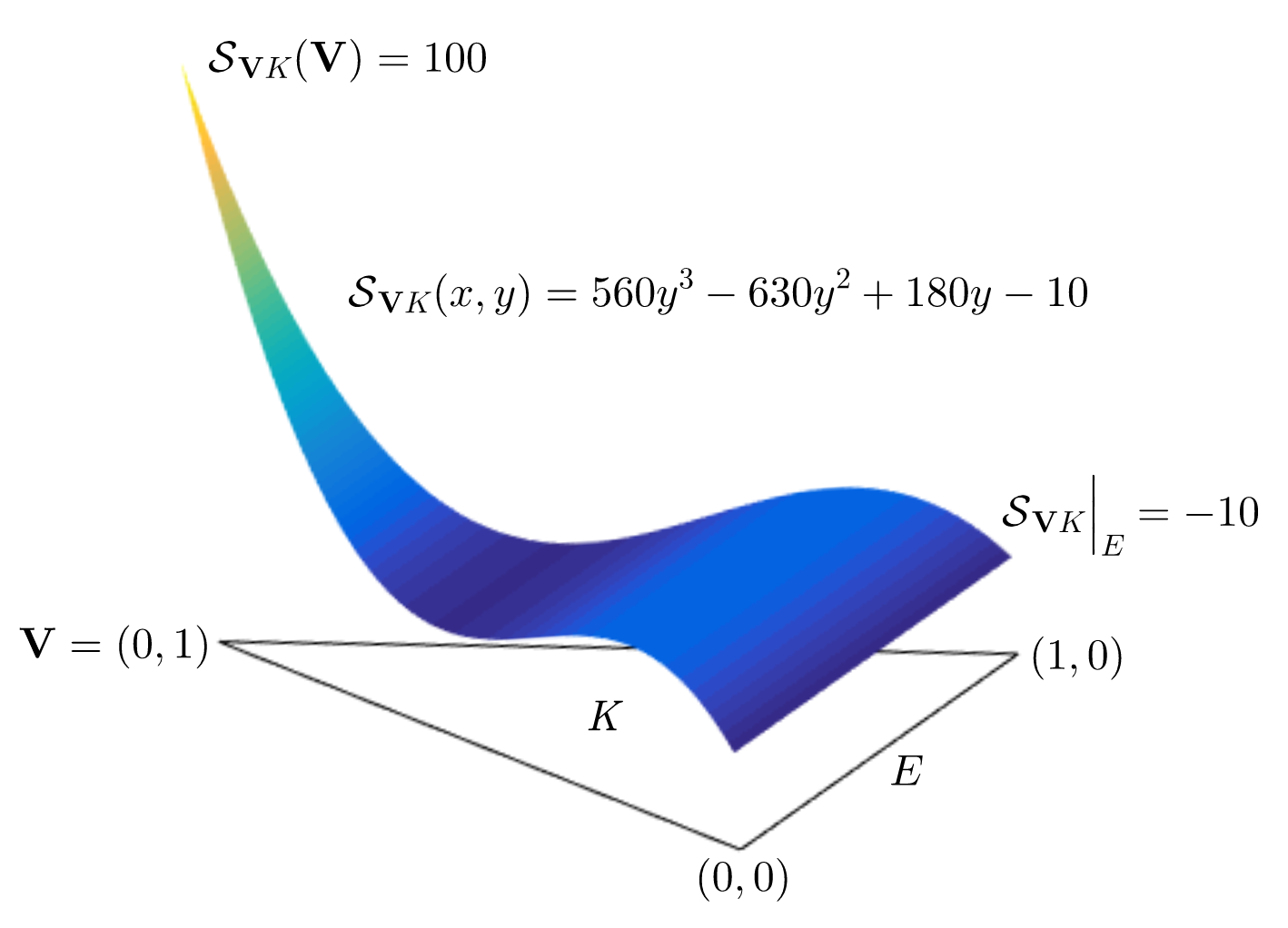}
  \caption{a sting function $\st{\V}{K}$ of  $\V$ on $K$. }
  \label{fig:sting}
\end{figure}

In the reference triangle
$\widehat{K}$ of vertices $\widehat\V_1=(0,0), \widehat\V_2=(1,0), \widehat\V_3=(0,1)$,
we have the following 3 sting functions:
\begin{equation}\label{def:sting}
  \arraycolsep=1.4pt\def\arraystretch{1.6}
  \begin{array}{lll}
    \st{\widehat{\V}_1}{\widehat{K}}&=& 560(1-x-y)^3-630(1-x-y)^2+180(1-x-y)-10,\\
    \st{\widehat{\V}_2}{\widehat{K}}&=& 560x^3-630x^2+180x-10,\\
    \st{\widehat{\V}_3}{\widehat{K}}&=& 560y^3-630y^2+180y-10. 
  \end{array}
\end{equation}
Since $\st{\V}{K}= \st{\widehat{\V}_1}{\widehat{K}} \circ F^{-1}$
for an affine transformation $F:\widehat{K}\longrightarrow K$, we estimate
\begin{equation}\label{est:stVK}
  \| \st{\V}{K} \|_{0,K} \le C_\sigma |K|^{1/2}.
\end{equation}

\subsection{non-sting function}
Given triangle $K\in\Th$ of vertices $\V_1,\V_2,\V_3$,
the following 16-point Lyness quadrature rule is exact for every
polynomial $q\in P^6$:
\begin{equation}\label{eq:Lyn-quad}
  \int_K q(x,y)\ dxdy=|K|\sum_{i=0}^{15} q(\G_i) g_i,
\end{equation}
where $\G_4, \G_5, \cdots, \G_{15}$ belong to $\p K$ and $\G_0$ is the gravity center of $K$ and
$\G_1, \G_2, \G_3$ are the centers of the medians, that is,
\[ \G_1= \frac12\V_1 + \frac14\V_2 + \frac14\V_3,\quad \G_2=
  \frac14\V_1 + \frac12\V_2 + \frac14\V_3,\quad \G_3= \frac14\V_1 +
  \frac14\V_2 + \frac12\V_3,
\]
and $g_0,g_1,\cdots,g_{15}$ are nonzero quadrature weights \cite{Lyness1975, Park2020}.

\begin{lemma}\label{lem:non-sting-unisol}
  If a cubic function $q\in P^3$ satisfies the following 10 conditions, then $q=0$.
  \begin{equation}\label{eq:lem-unisol}
    \nabla q (\G_j)=(0,0), j=0,1,2,3,\quad\mbox{ and }\quad  q(\G_0)=q(\G_1)=0.
  \end{equation}
\end{lemma}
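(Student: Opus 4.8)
The plan is to exploit the structure of the ten conditions in \eqref{eq:lem-unisol}: nine of them are gradient vanishings at the four points $\G_0,\G_1,\G_2,\G_3$, and the last two are point-value vanishings at $\G_0,\G_1$. Since $\dim P^3 = 10$, proving $q=0$ from these ten linear conditions amounts to showing that the $10\times 10$ matrix of these functionals against a basis of $P^3$ is nonsingular, i.e. that the conditions are unisolvent. Because the points $\G_0,\G_1,\G_2,\G_3$ and the whole setup are affine-invariant — $\G_0$ is the centroid, and $\G_1,\G_2,\G_3$ are the median centers, all defined by barycentric combinations of $\V_1,\V_2,\V_3$ — it suffices to verify the claim on the reference triangle $\widehat K$ with vertices $(0,0),(1,0),(0,1)$, and then transport the conclusion by the chain rule (gradients pull back through the fixed linear part of the affine map, so vanishing of $\nabla q$ at the $\G_j$ is preserved).

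On $\widehat K$ the relevant points are explicit: $\G_0=(\tfrac13,\tfrac13)$, $\G_1=(\tfrac14,\tfrac14)$, $\G_2=(\tfrac12,\tfrac14)$, $\G_3=(\tfrac14,\tfrac12)$. I would then write a general cubic $q(x,y)=\sum_{i+j\le 3} c_{ij}x^i y^j$ with ten unknown coefficients, impose the ten conditions $q_x(\G_k)=q_y(\G_k)=0$ for $k=0,1,2,3$ together with $q(\G_0)=q(\G_1)=0$, and show the resulting homogeneous linear system has only the trivial solution. This is a finite, purely mechanical rank computation; one can either compute the determinant directly or argue in stages — for instance, the six gradient conditions at $\G_1,\G_2,\G_3$ already strongly constrain $q$, and then the remaining four conditions (the two gradient components at $\G_0$ and the two point values) finish it off. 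A cleaner route may be to use the two point-value conditions $q(\G_0)=q(\G_1)=0$ to observe that $q$ vanishes along the median segment through $\G_0$ and $\G_1$ (these two points lie on the median from $\V_1$ to the midpoint of $\V_2\V_3$; a cubic vanishing at two interior points of a line, \emph{together} with vanishing tangential derivatives there from the gradient conditions, forces the line to be a factor), extract that linear factor, and reduce to a lower-degree uniqueness statement for the quadratic cofactor — though care is needed because only two point values, not enough alone to force a linear factor, are available, so the gradient data must be brought in.

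The main obstacle is organizing the ten conditions so that the linear-algebra step is transparent rather than a brute-force $10\times 10$ determinant: the conditions mix point values and gradient values at four nearly-collinear-looking points, and a naive elimination is messy. I would expect the slickest argument to combine the collinearity of $\G_0,\G_1$ on a median with the gradient vanishings to peel off a linear factor corresponding to that median, then repeat or use symmetry for the other medians, ultimately showing $q$ is divisible by a product of lines of total degree exceeding $3$ unless $q\equiv 0$. If that geometric factorization argument turns out to be delicate, the fallback is simply to exhibit the explicit nonzero determinant of the $10\times10$ system on $\widehat K$, which is a one-line verification by computer algebra and fully rigorous; the affine invariance then yields the general case at no extra cost.
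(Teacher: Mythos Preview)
Your proposal is correct in the sense that the affine reduction plus the $10\times 10$ determinant fallback is a valid proof, and your first geometric observation---that the four conditions $q(\G_0)=q(\G_1)=0$, $\partial_{\t}q(\G_0)=\partial_{\t}q(\G_1)=0$ force the restriction of $q$ to the median through $\G_0,\G_1$ to be a univariate cubic with two double zeros, hence identically zero---is exactly the paper's opening move. The paper, however, proceeds quite differently after that. It does not work on the reference triangle $\widehat K$; instead it places the four $\G_j$ themselves at convenient coordinates $\G_1=(0,0),\ \G_2=(1,1),\ \G_3=(1,-1),\ \G_0=(2/3,0)$ (legitimate since $\G_0$ is the centroid of $\G_1,\G_2,\G_3$), which makes the median the $x$-axis. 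Rather than factoring out that line and analyzing the quadratic cofactor, the paper restricts $q$ to four auxiliary lines $x=1$, $x=2/3$, $y=x$, $y=-x$, expresses each restriction as a univariate cubic depending on a single parameter $\beta$, and closes the argument with the key observation that $q_{yyy}$ is a constant, so the third derivatives of $q(1,t)$ and $q(2/3,t)$ coincide; comparing values then forces $\beta=0$. This yields a short, fully by-hand proof with no determinant computation. Your factorization route can be completed (after writing $q=L\cdot r$ one gets $r(\G_0)=r(\G_1)=0$ from the normal derivatives and four more linear conditions on $r\in P^2$ from $\nabla q(\G_2)=\nabla q(\G_3)=0$), but you would still owe a check that those six conditions on $r$ are independent, which is not obviously cleaner than the paper's line-restriction trick; and working in the $\widehat K$ coordinates you chose makes every step messier than in the paper's $\G_j$-adapted frame.
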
 \begin{proof}
  Since $\G_0$ is the gravity center of $\G_1, \G_2, \G_3$,
  it suffices to prove $q=0$ in case
  $$\G_1=(0,0), \G_2=(1,1), \G_3=(1,-1), \G_0=(2/3,0).$$  Let
    \[a(t)=q(1,t),\ b(t)=q(2/3,t),\ c(t)=q(t,t),\ d(t)=q(t,-t).\]
  From the condition \eqref{eq:lem-unisol},
  $q$ vanishes on the line $y=0$ and
  $a'(\pm 1)=0$. It means
  \begin{equation}\label{eq:lem-nsu-1}
 a(t)=\bet (t^3-3t)\quad \mbox{ for some constant } \bet.
\end{equation}
Then since $c(0)=c'(0)=c'(1)=0,\ c(1)=a(1)=-2\bet$, we deduce
\begin{equation}\label{eq:lem-nsu-2}
  c(t)=\bet(4t^3-6t^2).
\end{equation} 
Similarly, from $d(0)=d'(0)=d'(1)=0,\ d(1)=a(-1)=2\bet$, we have
\begin{equation}\label{eq:lem-nsu-3}
  d(t)=-\bet(4t^3-6t^2).
\end{equation}
We note that $ b^{'''}=q_{yyy}=a^{'''}=6\bet$ since $q\in P^3$. Thus, we can write
\begin{equation}\label{eq:lem-nsu-4}
  b(t)=\bet t^3,
\end{equation}
from $ b(0)=b'(0)=0,\ b(2/3)=c(2/3)=-d(2/3)=-b(-2/3)$
by \eqref{eq:lem-nsu-2},\eqref {eq:lem-nsu-3}.

Now, $\bet=0$ comes from the following equation: 
  \[\bet(2/3)^3=b(2/3)=c(2/3)=\bet(4(2/3)^3-6(2/3)^2).\]
Then $q=0$ since $\bet=0$  in  \eqref{eq:lem-nsu-1}-\eqref{eq:lem-nsu-4}.
\end{proof}

From the unisolvancy in the above lemma,  for each $k=1,2,3$, we can define
two functions ${\ns{\G_k}{K}}^1, {\ns{\G_k}{K}}^2\in P^3(K)$, called non-sting,
by their following values:
\begin{equation}\label{def:nsGK}
  \arraycolsep=1.4pt\def\arraystretch{1.6}
  \begin{array}{c}
    {\ns{\G_k}{K}}^{1}(\G_0)=0,\ {\ns{\G_k}{K}}^{1}(\G_k)=0,\ 
    \nabla{\ns{\G_k}{K}}^{1}(\G_j)=(\delta_{kj},0),\quad j=0,1,2,3,\\
    {\ns{\G_k}{K}}^{2}(\G_0)=0,\ {\ns{\G_k}{K}}^{2}(\G_k)=0,\
    \nabla{\ns{\G_k}{K}}^{2}(\G_j)=(0,\delta_{kj}),\quad j=0,1,2,3,
  \end{array}
\end{equation}
where $\delta$ is the Kronecker delta.

In the reference triangle
$\widehat{K}$ of vertices $\widehat\V_1=(0,0), \widehat\V_2=(1,0), \widehat\V_3=(0,1)$,
they appear in
\begin{equation*}
  \arraycolsep=1.4pt\def\arraystretch{1.6}
  \begin{array}{lll}
    {\ns{\widehat{\G}_1}{\widehat{K}}}^1&=&\frac13(-12+75x+45y-261xy-108x^2-27y^2+228x^2y+180xy^2+40x^3-16y^3),\\
    {\ns{\widehat{\G}_1}{\widehat{K}}}^2&=&\frac13(-12+45x+75y-261xy-27x^2-108y^2+180x^2y+228xy^2-16x^3+40y^3),\\
    {\ns{\widehat{\G}_2}{\widehat{K}}}^1&=&5-26x-28y+140xy+21x^2+28y^2-112x^2y-112xy^2+8x^3,\\
    {\ns{\widehat{\G}_2}{\widehat{K}}}^2&=&\frac13(-10+57x+51y-249xy-75x^2-54y^2+228x^2y+180xy^2+16x^3+8y^3),\\
    {\ns{\widehat{\G}_3}{\widehat{K}}}^1&=&\frac13(-10+51x+57y-249xy-54x^2-75y^2+180x^2y+228xy^2+8x^3+16y^3),\\
    {\ns{\widehat{\G}_3}{\widehat{K}}}^2&=&5-28x-26y+140xy+28x^2+21y^2-112x^2y-112xy^2+8y^3.
  \end{array}
\end{equation*}

We note that
\begin{equation}\label{est:nsGK}
  \|  {\ns{\G_k}{K}}^i \|_{0,K}\le C_\sigma |K|^{1/2}|{\ns{\G_k}{K}}^i |_{1,K} \le
  C_\sigma |K|,\quad k=1,2,3,\ i=1,2.
\end{equation}
\subsection{basis for $P^3(K)$}
Given triangle $K\in\Th$ of vertices $\V_1, \V_2, \V_3$ and centers 
$\G_1, \G_2, \G_3$ of medians, define a set
\[ \BB=\{\st{\V_1}{K},\st{\V_2}{K},\st{\V_3}{K},
  {\ns{\G_1}{K}}^1, {\ns{\G_1}{K}}^2, {\ns{\G_2}{K}}^1, {\ns{\G_2}{K}}^2, {\ns{\G_3}{K}}^1,
  {\ns{\G_3}{K}}^2,1\} \subset P^3(K).\]
\begin{lemma}\label{lem:basis}
  $\BB$ is a basis for $P^3(K)$
\end{lemma}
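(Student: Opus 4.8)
The plan is to show that the ten functions in $\BB$ are linearly independent; since $\dim P^3(K) = 10$, this suffices. So suppose a linear combination vanishes:
\[
  q := \sum_{j=1}^{3} a_j\,\st{\V_j}{K}
     + \sum_{k=1}^{3}\bigl( b_k^1\,{\ns{\G_k}{K}}^1 + b_k^2\,{\ns{\G_k}{K}}^2 \bigr)
     + c\cdot 1 = 0 \quad\text{on } K,
\]
with scalar coefficients $a_j, b_k^i, c$. The strategy is to extract the coefficients one group at a time by applying to $q$ the various functionals that define the basis functions, exploiting the biorthogonality built into \eqref{eq:quadst}, Lemma~\ref{lem:non-sting-unisol} and \eqref{def:nsGK}.

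First I would recover the $a_j$. Testing the identity $q=0$ against an arbitrary $r\in P^3$ via the $L^2(K)$ inner product and using the quadrature rule \eqref{eq:quadst}, namely $\int_K \st{\V_j}{K}\, r = |K|\, r(\V_j)$, gives
\[
  0 = |K|\sum_{j=1}^{3} a_j\, r(\V_j)
      + \sum_{k,i} b_k^i \int_K {\ns{\G_k}{K}}^i\, r
      + c\int_K r .
\]
This is not yet enough to isolate the $a_j$ because the non-sting and constant terms still contribute. The cleaner route is to pick $r$ to be the nodal quadratic (or cubic) bump killing the relevant data: but the most economical argument is to note that $q=0$ means $q$ has \emph{all} its degrees of freedom zero in \emph{any} basis, so I can instead evaluate the dual functionals of the $\ns{}{}$-family directly. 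Concretely, apply the functionals $\nabla(\cdot)(\G_j)$ for $j=0,1,2,3$ and $(\cdot)(\G_0)$, $(\cdot)(\G_1)$ to $q$. The sting functions are smooth but are \emph{not} assumed to vanish at the $\G_j$, so they will appear; however the point is that after I have first shown $a_1=a_2=a_3=0$, these eight functionals applied to $\sum b_k^i\,\ns{}{}^i + c$ reproduce exactly the data in \eqref{eq:lem-unisol}/\eqref{def:nsGK}, forcing all $b_k^i=0$ and then $c=0$ (the last from, e.g., evaluating at $\G_0$ or integrating). So the real first task is the vanishing of the $a_j$.

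For that, I would return to the integrated identity and choose three test functions $r = \ell_1, \ell_2, \ell_3 \in P^3$ adapted to the vertices. A natural and self-contained choice: let $\ell_j$ be the product of the two barycentric coordinates that vanish on the edge opposite $\V_j$, suitably scaled — these are quadratics with $\ell_j(\V_j)=1$ but $\ell_j(\V_i)\ne 0$ in general, so a little care (or a $3\times3$ invertible Vandermonde-type system in the vertex values) is needed. Even simpler: use $r\equiv$ the three barycentric coordinates $\lambda_1,\lambda_2,\lambda_3$ themselves, which satisfy $\lambda_i(\V_j)=\delta_{ij}$; then $\int_K \st{\V_j}{K}\,\lambda_i = |K|\,\delta_{ij}$, and the system becomes
\[
  0 = |K|\, a_i + \sum_{k,m} b_k^m \int_K {\ns{\G_k}{K}}^m\,\lambda_i + c\,\frac{|K|}{3},
  \qquad i=1,2,3.
\]
This still couples $a_i$ to the other coefficients, so I would handle all ten functionals as one linear system rather than pretending the groups decouple perfectly; the cleanest writeup simply assembles the $10\times10$ matrix whose rows are the ten functionals applied to the ten basis functions and checks it is invertible — but computing that determinant by hand is exactly what I want to avoid. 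The main obstacle, then, is organizing the elimination so that the sting functions' nonzero values at the interior points $\G_j$ and the non-sting functions' nonzero integrals don't force a brute-force $10\times10$ computation. I expect the resolution is: (i) the sting functions span a $3$-dimensional space detected by the vertex-evaluation functionals through the biorthogonality \eqref{eq:quadst}, which the non-sting and constant functions do not see once one also imposes the normalization that $\ns{}{}^i$ and $1$ are "orthogonal enough" to the vertex data; failing that, (ii) fall back on affine invariance — by \eqref{def:sting} and the explicit reference-element formulas for $\ns{\widehat\G_k}{\widehat K}^i$, it is enough to verify linear independence on the single reference triangle $\widehat K$, reducing Lemma~\ref{lem:basis} to one explicit (if tedious) determinant evaluation that can be stated as "a direct computation shows $\det \ne 0$." I would present route (ii) as the fallback and route (i), via \eqref{eq:quadst}, Lemma~\ref{lem:non-sting-unisol} and \eqref{def:nsGK}, as the conceptual proof, with the decoupling made rigorous by testing $q=0$ successively against the appropriate functionals in the order: vertex evaluations first (to kill the $a_j$ modulo lower-order contributions), then the gradient/point functionals at the $\G_j$ (to kill the $b_k^i$), then one more functional (to kill $c$).
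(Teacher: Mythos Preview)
Your route (ii) --- affine-reduce to $\widehat K$ and compute a $10\times10$ determinant --- is valid, but as you say it is exactly the brute force you are trying to avoid. Your route (i), the conceptual one, is not yet a proof: you correctly identify that testing $q=0$ against barycentrics $\lambda_i$ or against the gradient/point functionals at the $\G_j$ does not decouple the three groups, and you never actually produce functionals that do. ``Vertex evaluations first (to kill the $a_j$ modulo lower-order contributions)'' is a hope, not an argument.

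The paper achieves the decoupling you are looking for, but in the opposite order and with a different class of test functionals: it tests $q$ against \emph{partial derivatives of quartic functions}. For the non-sting coefficients, take $v\in P^4$ with $v(\G_1)=1$, $v(\G_2)=v(\G_3)=0$, $v\equiv0$ on $\partial K$; then $v_x\in P^3$ vanishes at every vertex (because $v$ vanishes on two edges through each $\V_j$), so by the sting quadrature rule \eqref{eq:quadst} all three $\st{\V_j}{K}$ drop out of $\int_K q\,v_x$, and $(1,v_x)=0$ since $v|_{\partial K}=0$. Integrating the remaining non-sting part by parts and applying the degree-$6$ Lyness rule \eqref{eq:Lyn-quad} together with the nodal definition \eqref{def:nsGK} isolates a single $b_k^i$. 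Once all $b_k^i=0$, the $a_j$ are separated from $c$ by testing against $z=\nabla w\cdot\overrightarrow{\V_1\V_2}$ for a quartic $w$ vanishing on the two edges through $\V_3$ with $\int_{\overline{\V_1\V_2}}w=0$ and $z(\V_1)=1$, $z(\V_2)=0$: then $z(\V_3)=0$ and $\int_K z=0$ (divergence theorem), so $\int_K q\,z=a_1|K|$.

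The idea you were missing is that the right dual functionals are not point/gradient evaluations but $L^2$ pairings with derivatives of carefully chosen $P^4$ bubbles; these annihilate the sting and constant parts \emph{exactly}, which is what makes the elimination triangular rather than a full $10\times10$ system.
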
 
\begin{proof}
 For some 10 constants $\alp_1,\alp_2,\alp_3,\bet_1,\bet_2,\cdots,\bet_6,c$, assume that
  \begin{multline}\label{eq:lem:basis-0}
    q_h =\alp_1\st{\V_1}{K}+\alp_2\st{\V_2}{K}+\alp_3\st{\V_3}{K}\\
    +\bet_1{\ns{\G_1}{K}}^1+\bet_2 {\ns{\G_1}{K}}^2+\bet_3 {\ns{\G_2}{K}}^1
    +\bet_4 {\ns{\G_2}{K}}^2+ \bet_5 {\ns{\G_3}{K}}^1 +\bet_6{\ns{\G_3}{K}}^2+c =0.
  \end{multline}
  There exists a quartic function $v\in P^4$ such that 
  \begin{equation}\label{cond:test-v}
    v(\G_1)=1,\quad v(\G_2)=v(\G_3)=0,\quad v\mbox{ vanishes on } \p K.
  \end{equation}
  Then since $v_x\in P^3$ vanishes at $\V_1,\V_2,\V_3$, we expand the following
  from \eqref{def:nsGK}, \eqref{eq:lem:basis-0},\eqref{cond:test-v} and
  the quadrature rules \eqref{eq:quadst}, \eqref{eq:Lyn-quad},
  \begin{equation}\label{eq:lem:basis-spanqhw}
    0=\int_K q_h\hspace{1pt} v_x\ dxdy =\int_K \check{q}_h \hspace{1pt}v_x\ dxdy
    =-\int_k \check{q}_{h,x} \hspace{1pt}v\ dxdy
    =-|K|\bet_1 {\ns{G_1}{K}}_x^1(\G_1) v(\G_1)g_1,
  \end{equation}
  where $\check{q}_h=q_h-\alp_1\st{\V_1}{K}-\alp_2\st{\V_2}{K}-\alp_3\st{\V_3}{K}.$
  Thus, $\bet_1=0$ and similarly $\beta_j=0,j=2,3,\cdots,J$.
  
  There also exists a quartic function $w\in P^4$ such that
  \begin{equation}\label{cond:test-w}
    \nabla w(\V_1)\cdot\Evec{\V_1}{\V_2}=1,\ 
    \nabla w(\V_2)\cdot\Evec{\V_1}{\V_2}=0,\  \int_{\overline{\V_1\V_2}} w\ d\ell=0,\ 
    w\mbox{ vanishes on } \overline{\V_1\V_3}\cup  \overline{\V_2\V_3}.
  \end{equation}
  Then with $z=\nabla w \cdot\Evec{\V_1}{\V_2}\in P^3$,
  we get the following from \eqref{cond:test-w} and
  the quadrature rule  \eqref{eq:quadst},
  \begin{equation*}
    0=\int_K \left(\alp_1\st{\V_1}{K}+\alp_2\st{\V_2}{K}+\alp_3\st{\V_3}{K}+c\right)z\ dxdy
    =\alp_1|K| z(\V_1).
  \end{equation*}
It means $\alp_1=0$ and similarly $\alp_2=\alp_3=0$.  
\end{proof}

\subsection{decomposition of $\PP_h^3(\O)$}
Given triangle $K$, let $\ns{h}{}(K)$
be a space spanned by 6 non-sting functions in $P^3(K)$, that is,
\[ \ns{h}{}(K)=  <{\ns{\G_1}{K}}^1,{\ns{\G_1}{K}}^2, {\ns{\G_2}{K}}^1, {\ns{\G_2}{K}}^2, {\ns{\G_3}{K}}^1, {\ns{\G_3}{K}}^2>.  \]
Then by Lemma \ref{lem:basis}, we can decompose $P^3(K)$ into
\begin{equation}\label{eq:decom-pK}
P^3(K)=\ns{h}{}(K) \bigoplus <\st{\V_1}{K},\st{\V_2}{K},\st{\V_3}{K},1>.
\end{equation}
Given vertex $\V$, let $\st{h}{}(\V)$ be a space spanned by all sting
functions of $\V$, that is,
\[ \st{h}{}(\V)= <\st{\V}{K_1},\st{\V}{K_2},\cdots, \st{\V}{K_J}>,\]
where $K_1, K_2, \cdots, K_J$ are all triangles in $\Th$ sharing $\V$.

We call $q_h^\V\in\st{h}{}(\V)$ a sting pressure of $\V$ and  $q_h^K\in \ns{h}{}(K)$  a non-sting pressure of $K$. Examples of their supports are depicted in Figure \ref{fig:qhvqhK}.

\begin{figure}[hb]
  \centering
  \subfloat[The support of $q_h^\V$, a sting pressure of $\V$ \newline
 $q_h^\V$ is determined by $q_h^\V\big|_{K_j}(\V),\ j=1,2,3,4,5.$]{
    \includegraphics[width=0.38\textwidth]{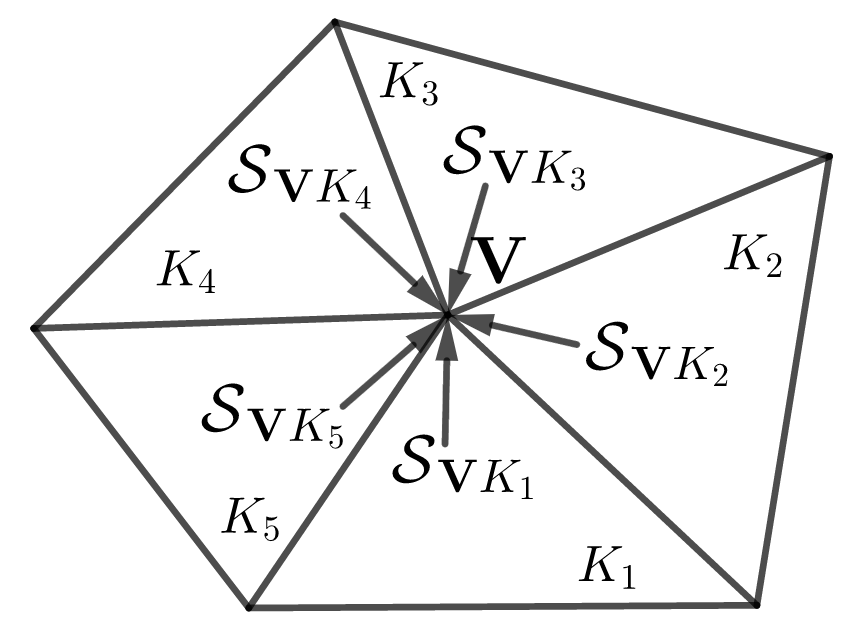}}\qquad\qquad
  \subfloat[The support of $q_h^K$, a non-sting pressure of $K$\newline
  $q_h^K$ is determined by $\nabla q_h^K(\G_j),\ j=1,2,3.$
  ]{\raisebox{3ex}{
    \includegraphics[width=0.38\textwidth]{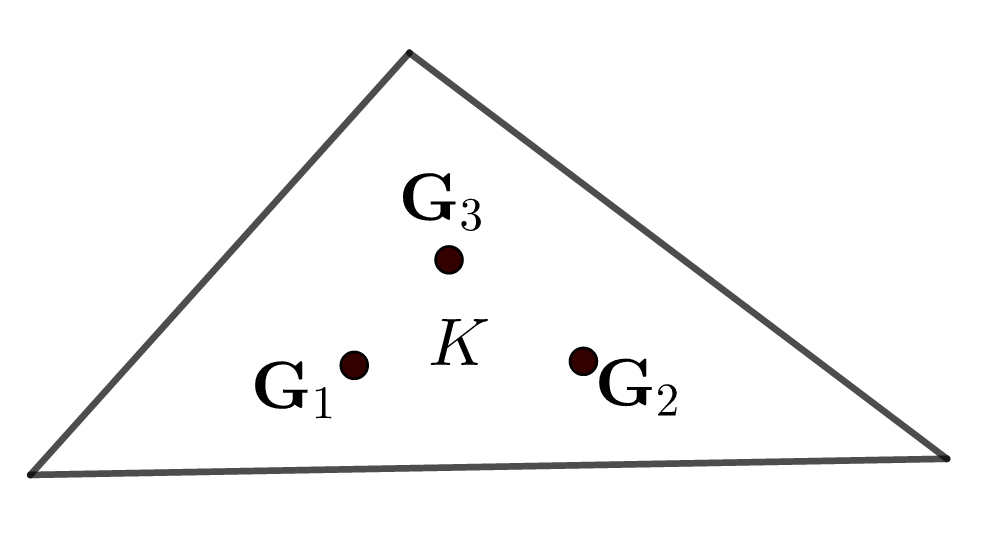}}}
  \caption{
    characteristics of $q_h^\V\in \st{h}{}(\V)$ and $q_h^K\in\ns{h}{}(K)$}
  \label{fig:qhvqhK}
\end{figure}

Then from \eqref{eq:decom-pK}, we can decompose $\PP_h^3(\O)$ into
\begin{equation}\label{eq:decom-ph3}
  \Mh=\left(\bigoplus_{K\in\Th}\ns{h}{}(K)\right) \bigoplus \left(\bigoplus_{\V\in\VV_h}\st{h}{}(\V)\right)
  \bigoplus \mathcal{P}_h^0(\O).
\end{equation}

\section{Successive method for pressure}\label{sec:main}
In this section, we will find the following discrete pressures:
\begin{equation*}
\ph^K \in \ns{h}{}(K), \quad \ph^{\V}\in \st{h}{}(\V),\quad \ph^{\CC}\in \mathcal{P}_h^0(\O),
\end{equation*}
in a successive way consisting of the following 5 steps.

\begin{description}
  \item[Step 1.] 
  Calculate a non-sting pressure $\ph^K\in \ns{h}{}(K)$ using $\u_h, \f$ for each triangle $K\in \Th$ and superpose  them to form 
  \[ \ph^{\ns{}{}} =\sum  \ph^K.   \]
\item[Step 2.]  Calculate a sting pressure $\ph^{\V_r}\in \st{h}{}(\V_r)$  using $\ph^{\ns{}{}}, \u_h, \f$
  for each regular vertex $\V_r$ and superpose them to form
  \[ \dotone{\ph} = \ph^{\ns{}{}} + \sum \ph^{\V_r}.\] 
\item[Step 3.]
  Calculate a sting pressure $\ph^{\V_s}\in \st{h}{}(\V_s)$ using  $\dotone{\ph}, \u_h, \f$ for each nearly singular vertex $\V_s$ except corners and superpose them to form
\[ \dottwo{\ph} = \dotone{\ph} + \sum \ph^{\V_s}.\]
\item[Step 4.]
Calculate a sting pressure $\ph^{\V_c}\in \st{h}{}(\V_c)$ using $\dottwo{\ph}, \u_h, \f$
  for each nearly singular corner $\V_c$  and superpose them to form
\[ \dotthr{\ph} = \dottwo{\ph} + \sum \ph^{\V_c}.\]
\item[Step 5.]
  Calculate a piecewise constant pressure $\ph^{\CC}\in \mathcal{P}_h^0(\O)$
  using $\dotthr\ph, \u_h, \f$.
\end{description}
The detail of Step $i$ above
will be given in Section \ref{sec:main}.$i$ below, $i=1,2,3,4,5$.

If we define $\ph\in \mathcal{P}_h^3(\O)$ as
\begin{equation}\label{def:ph}
  \ph = \dotthr\ph+\ph^{\CC},
\end{equation}  
it shows the optimal order of convergence as in the following theorem.
\begin{theorem}\label{th:prs-error}
  Let $(\u,p)\in [H_0^1(\O)]^2 \times L_0^2(\O)$ satisfy
  \eqref{prob:conti}.
  If $(\u,p)\in  [H^5(\O)]^2\times H^4(\O)$, then
  \begin{equation}\label{eq:th:prs-error}
    \|p-\ph\|_0 \le C h^4 (|\u|_5 + |p|_4).
  \end{equation}
\end{theorem}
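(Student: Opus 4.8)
The plan is to control $\|p-\ph\|_0$ by splitting the error along the direct sum decomposition \eqref{eq:decom-ph3}. Write $\Pi_h p$ for the $L^2$-projection (or a suitable quasi-interpolant) of $p$ onto $\mathcal{P}_h^3(\O)$, which already satisfies $\|p-\Pi_h p\|_0 \le Ch^4|p|_4$ by standard approximation theory. Decompose $\Pi_h p = (\Pi_h p)^{\ns{}{}} + \sum_{\V}(\Pi_h p)^{\V} + (\Pi_h p)^{\CC}$ according to \eqref{eq:decom-ph3}, and compare this piece-by-piece with $\ph = \dotthr\ph + \ph^{\CC}$. Thus it suffices to bound the non-sting part $\|(\Pi_h p)^{\ns{}{}} - \ph^{\ns{}{}}\|_0$, each sting part $\|(\Pi_h p)^{\V} - \ph^{\V}\|_0$ (split over regular vertices, nearly singular non-corner vertices, and nearly singular corners), and the constant part $\|(\Pi_h p)^{\CC} - \ph^{\CC}\|_0$. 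The estimates \eqref{est:stVK}, \eqref{est:nsGK} give the norm equivalences that convert each coefficient bound into an $L^2$ bound, and Theorem \ref{th:vel-error} supplies the $O(h^4)$ velocity error $|\u-\u_h|_1$ that will feed every step as data error.

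The core of the argument is that each of the five steps in Section \ref{sec:main} is defined by testing the momentum residual of \eqref{prob:conti} against a specifically engineered family of test functions — the curls/gradients of quartic functions appearing in the unisolvency arguments of Lemmas \ref{lem:non-sting-unisol}–\ref{lem:basis} — and that the quadrature rules \eqref{eq:quadst} and \eqref{eq:Lyn-quad} make these tests reproduce the corresponding component of any $P^3$ (indeed $P^4$) pressure exactly. So for Step 1, testing \eqref{prob:conti} and the discrete equation defining $\ph^K$ against the gradient of the function $v$ from \eqref{cond:test-v} (and its companions) isolates $\nabla\ph^K(\G_j)$ against $\nabla(\Pi_h p)(\G_j)$ up to (i) the consistency error from replacing $\u$ by $\u_h$, controlled by $|\u-\u_h|_1 \le Ch^4|\u|_5$, and (ii) the quadrature/approximation error from $p$ not being exactly $P^3$ on $K$, controlled by $h^4|p|_{4}$ on the patch; summing over $K$ with finite overlap gives $\|(\Pi_h p)^{\ns{}{}}-\ph^{\ns{}{}}\|_0 \le Ch^4(|\u|_5+|p|_4)$. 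Steps 2–4 are the same mechanism one codimension up: having fixed the non-sting part, the sting pressure at a vertex $\V$ is determined by the nodal values $q_h^\V|_{K_j}(\V)$ via the quadrature rule \eqref{eq:quadst} tested against $z = \nabla w\cdot\Evec{\V_1}{\V_2}$ from \eqref{cond:test-w}; the already-computed $\dotone\ph$ (resp. $\dottwo\ph$, $\dotthr\ph$) enters as known data, and Lemma \ref{lem:isol-vtx} guarantees the nearly singular vertices are isolated so the local solves in Steps 3–4 decouple and stay well-conditioned uniformly in $\sigma$. Step 5 is the final $P^0$ solve, which is exactly the standard (stable) lowest-order correction: the inf-sup / surjectivity of $\div$ onto $\mathcal{P}_h^0(\O)\cap L_0^2$ from $V_h$ (or from the $P^4$ space) gives $\|(\Pi_h p)^{\CC}-\ph^{\CC}\|_0 \le C(|\u-\u_h|_1 + \|\dotthr\ph - (\Pi_h p)^{\text{rest}}\|_0) \le Ch^4(|\u|_5+|p|_4)$, using the bounds from Steps 1–4.

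Collecting the four groups of estimates and the triangle inequality with $\|p-\Pi_h p\|_0$ yields \eqref{eq:th:prs-error}. The main obstacle I expect is not any single step but the bookkeeping of \emph{error propagation through the successive structure}: since $\dotone\ph,\dottwo\ph,\dotthr\ph$ are used as data in later steps, one must verify that the constant $C$ does not degrade (e.g. no $h$-powers lost, no hidden dependence on the number of singular vertices) as errors cascade through Steps 2–5 — this is where Lemma \ref{lem:isol-vtx}, the uniform norm equivalences \eqref{est:stVK}–\eqref{est:nsGK}, and the finite-overlap property of the patches $\KK(\V)$ must be combined carefully. A secondary technical point is choosing the right interpolant $\Pi_h p$ so that its decomposition components in \eqref{eq:decom-ph3} are themselves $O(h^4)$-accurate in the relevant (nodal, gradient-at-$\G_j$, mean) functionals; a component-wise-defined quasi-interpolant matching the degrees of freedom of $\BB$ is the natural choice and makes every comparison above a genuine "like-with-like" estimate.
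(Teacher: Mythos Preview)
Your overall architecture is right --- decompose via \eqref{eq:decom-ph3}, compare component by component, and feed $|\u-\u_h|_1\le Ch^4|\u|_5$ from Theorem~\ref{th:vel-error} into each step --- and this is exactly the paper's skeleton. But there is a genuine gap in your treatment of Steps~3--4.

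You write that ``Lemma~\ref{lem:isol-vtx} guarantees the nearly singular vertices are isolated so the local solves in Steps~3--4 decouple and stay well-conditioned uniformly in $\sigma$.'' This misreads the role of isolation. At a nearly singular vertex $\V$ the $2J\times J$ system built from the test functions $\w_h^{\t_j},\w_h^{\t_j^\perp}$ (the ``same mechanism one codimension up'') is precisely the system that \emph{degenerates}: its smallest singular value is controlled by $\min_j|\sin(\theta_j+\theta_{j+1})|$, which is what near-singularity kills. Isolation does nothing to repair this local conditioning. The paper's actual device is to \emph{replace} the equations coming from $\w_h^{\t_j^\perp}$ by jump conditions of the form
\[
\jump_{\jjp}(p_h^\V)=-\jump_{\jjp}(\dotone\ph),
\]
where $\jump_{\jjp}$ measures the tangential-derivative jump of the already-computed $\dotone\ph$ across $K_j\cap K_{j+1}$ at $\V$. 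This produces a new system whose conditioning is uniform in $\sigma$ regardless of how singular $\V$ is, because the diagonal jump contribution of $\st{\V}{K_j}$ is a fixed nonzero multiple of $\ell_j^2$ (cf.\ \eqref{def:sting}). Lemma~\ref{lem:isol-vtx} enters only afterwards: it guarantees that the \emph{neighbouring} vertices $\V_j$ are regular, so their sting pressures $p_h^{\V_j}$ have already been computed in Step~2 with the good bound of Lemma~\ref{lem:error-sr0}, and hence the right-hand side $-\jump_{\jjp}(\dotone\ph)$ carries only an $O(h^4)$ data error. Without the jump equations you have no uniformly bounded inverse at nearly singular vertices, and the cascade you worry about in your last paragraph would indeed blow up.

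A related point you flag as ``secondary'' is in fact forced by the above: $\Pi_h p$ cannot be the $L^2$-projection. The jump argument needs $\jump_{\jjp}(\Pi_h p)=0$, i.e.\ $\Pi_h p$ must be continuous across interior edges (and, for the corner case in Step~4, have continuous gradient at vertices). The paper takes $\Pi_h p$ to be the $P^3$ Hermite interpolant matching $p,\nabla p$ at vertices and $p$ at centroids; this is $C^0$ with $C^1$ nodes, which is exactly what the jump identities require.
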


\setcounter{subsection}{-1}
\subsection{decomposition of $\php$}
Let $\php\in \Mh$ be a Hermite interpolation of $p$ in the theorem \ref{th:prs-error}
such that
\begin{equation}\label{def:php}
  \nabla\php (\V)=\nabla p(\V),\quad  \php (\V)=p(\V),\quad \php(\G)=p(\G),
\end{equation}
at all vertices $\V$ and gravity centers $\G$ of triangles in $\Th$.
Then from \eqref{eq:decom-ph3}, we can split $\php\in\Mh$ into
\begin{equation}\label{eq:decom-pihp}
\Pi_h p= \sum_{K\in\Th}(\php)^K + \sum_{\V\in \VV_h} (\php)^{\V} +  (\php)^{\CC},
\end{equation}
where 
\begin{equation}
(\php)^K \in \ns{h}{}(K), \quad (\php)^{\V}\in \st{h}{}(\V),\quad (\php)^{\CC}\in \mathcal{P}_h^0(\O).
\end{equation}
We note that $\php$ satisfies that 
\begin{equation}\label{eq:pihp0}
  (\Pi_hp,\div \v)=(\f,\v)-(\nabla\u,\nabla\v)-(p-\Pi_hp,\div\v)\quad \mbox{ for all }
  \v\in \left[H_0^1(\O)\right]^2.
\end{equation}

We assume the following to exclude pathological meshes.
\begin{assumption}\label{asm:Th}
$\Th$ has no two adjacent triangles whose union has two corner points  of $\p\O$.
\end{assumption}

\subsection{ Step 1. non-sting pressure for triangle}\label{sec:non-sting}
Let's fix a triangle $K$ of vertices $\V_1, \V_2, \V_3$ and
centers $\G_1,\G_2,\G_3$ of medians.
There exists a unique
  function $v\in \mathcal{P}_h^4(\O)\cap H_0^1(\O)$ such that
  \[ v(\G_1)=1,\quad v(\G_2)=v(\G_3)=0,\quad v \mbox{ vanishes on }
    \O\setminus K. \]
  Then by same argument as in \eqref{eq:lem:basis-spanqhw} with a test function
  $\v_h=(v,0)$, all 6 non-sting basis functions of $\ns{h}{}(K)$ satisfy
  \begin{equation}\label{eq:ns-G1w}
    ({\ns{G_k}{K}}^i,\div{\v_h})=
    \left\{\arraycolsep=1.4pt\def\arraystretch{1.6}
        \begin{array}{l}
          -|K| {\ns{G_1}{K}}_x^1(\G_1) v(\G_1)g_1 = -|K|g_1,\ \mbox{ if } k=1 \mbox{ and } i=1,\\
          0,\quad \mbox{ otherwise}.
        \end{array}\right.
      \end{equation}

Generally, for each $k=1,2,3$, choose $v_k\in \mathcal{P}_h^4(\O)\cap H_0^1(\O)$ such that
\begin{equation}\label{eq:lem-ns-error-defwk}
  v_k(\G_j)=\delta_{kj}, j=1,2,3,\quad v_k \mbox{ vanishes on } \O\setminus K, 
\end{equation}
and define 
\begin{equation}\label{def:whk}
  {\v_{h,k}}^1=(v_k,0),\quad  {\v_{h,k}}^2=(0,v_k).
\end{equation}
Then by same argument as in \eqref{eq:ns-G1w} with \eqref{def:nsGK}, 
 \eqref{eq:lem-ns-error-defwk}, \eqref{def:whk}, all 6 basis functions of $\ns{h}{}(K)$ satisfy
\begin{equation}\label{eq:ns-Gkw}
 ({\ns{G_k}{K}}^i,\div{\v_{h,l}}^j)=-|K| g_k \delta_{kl}\delta_{ij},\quad k,l=1,2,3,\ i,j=1,2.
\end{equation}

Now with the aid of \eqref{eq:ns-Gkw},
we can find a unique $\ph^K\in \ns{h}{}(K)$ such that
\begin{equation}\label{eq:q_G}
  (p_h^K, \div \v_h) = (\f,\v_h)-(\nabla\u_h, \nabla\v_h)
  \quad \mbox{ for alll } \v_h\in \{{\v_{h,k}}^i\ :\ k=1,2,3,\ i=1,2\}.
\end{equation}
Actually, if we set $\ph^K$
for 6 unknown constants $\alp_{11}, \alp_{12}, \alp_{21}, \alp_{22}, \alp_{31}, \alp_{32}$ as
  \[ p_h^K= \alp_{11}{\ns{\G_1}{K}}^1+\alp_{12}{\ns{\G_1}{K}}^2
    +\alp_{21} {\ns{\G_2}{K}}^1 +\alp_{22}{\ns{\G_2}{K}}^2
    +\alp_{31}{\ns{\G_3}{K}}^1+\alp_{32} {\ns{\G_3}{K}}^2, \]
  we deduce that
  \begin{equation}\label{eq:lem-ns-error-sol}
    \alp_{k,i}=-|K|^{-1}g_k^{-1}\left((\f,{\v_{h,k}}^i)-(\nabla\u_h, \nabla{\v_{h,k}}^i)\right),\quad
    k=1,2,3,\ i=1,2.
  \end{equation}
  
\begin{lemma}\label{lem:ns-error}
  \begin{equation}\label{eq:ns-error}
    \|(\Pi_hp)^K-p_h^K\|_{0,K} \le C_\sigma  (|\u-\u_h|_{1,K}+\|p-\php\|_{0,K}).
  \end{equation}
\end{lemma}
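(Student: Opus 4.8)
\emph{Proof plan.} The idea is to compare $p_h^K$ and $(\php)^K$ coefficient by coefficient in the non-sting basis of $\ns{h}{}(K)$, using the biorthogonality \eqref{eq:ns-Gkw} between $\{{\ns{\G_k}{K}}^i\}$ and the bubbles $\{{\v_{h,l}}^j\}$. Write $p_h^K=\sum_{k,i}\alp_{k,i}{\ns{\G_k}{K}}^i$ and $(\php)^K=\sum_{k,i}\gam_{k,i}{\ns{\G_k}{K}}^i$; then \eqref{eq:ns-Gkw} gives $-|K|g_l\alp_{l,j}=(p_h^K,\div{\v_{h,l}}^j)$ and $-|K|g_l\gam_{l,j}=((\php)^K,\div{\v_{h,l}}^j)$, so the whole estimate reduces to bounding $\gam_{l,j}-\alp_{l,j}$ and then summing the six basis contributions with the help of $\|{\ns{\G_l}{K}}^j\|_{0,K}\le C_\sigma|K|$ from \eqref{est:nsGK}.

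For the $\alp$'s, equation \eqref{eq:q_G} with $\v_h={\v_{h,l}}^j$ reads $-|K|g_l\alp_{l,j}=(\f,{\v_{h,l}}^j)-(\nabla\u_h,\nabla{\v_{h,l}}^j)$, which is \eqref{eq:lem-ns-error-sol}. For the $\gam$'s, I would test the identity \eqref{eq:pihp0} with $\v={\v_{h,l}}^j\in[\PP_h^4(\O)\cap H_0^1(\O)]^2$. The key observation here is that the sting part and the constant part of the splitting \eqref{eq:decom-pihp} of $\php$ are invisible to $\div{\v_{h,l}}^j$: since $v_l\big|_K\in P^4$ vanishes on $\p K$, it is the product of the three (linear) barycentric coordinates of $K$ and a linear polynomial, hence $\nabla v_l$ vanishes at $\V_1,\V_2,\V_3$; thus $\div{\v_{h,l}}^j\in P^3$ vanishes at the three vertices, the sting quadrature rule \eqref{eq:quadst} gives $(\st{\V_m}{K},\div{\v_{h,l}}^j)=|K|(\div{\v_{h,l}}^j)(\V_m)=0$, and $(1,\div{\v_{h,l}}^j)=\int_{\p K}{\v_{h,l}}^j\cdot\n=0$. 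Because ${\v_{h,l}}^j$ vanishes off $K$, only the $K$-term of \eqref{eq:decom-pihp} survives, and \eqref{eq:ns-Gkw} yields $-|K|g_l\gam_{l,j}=(\php,\div{\v_{h,l}}^j)=(\f,{\v_{h,l}}^j)-(\nabla\u,\nabla{\v_{h,l}}^j)-(p-\php,\div{\v_{h,l}}^j)$.

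Subtracting the two identities gives $|K|g_l(\gam_{l,j}-\alp_{l,j})=(\nabla(\u-\u_h),\nabla{\v_{h,l}}^j)+(p-\php,\div{\v_{h,l}}^j)$. Applying the Cauchy--Schwarz inequality, the affine scaling bounds $|{\v_{h,l}}^j|_{1,K}\le C_\sigma$ and $\|\div{\v_{h,l}}^j\|_{0,K}\le C_\sigma$ (standard, since $v_l$ is the image under the affine map of a fixed reference bubble and $\{\Th\}$ is regular), and $g_l^{-1}\le C$ (a fixed positive Lyness weight), one obtains $|\gam_{l,j}-\alp_{l,j}|\le C_\sigma|K|^{-1}(|\u-\u_h|_{1,K}+\|p-\php\|_{0,K})$. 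Multiplying by $\|{\ns{\G_l}{K}}^j\|_{0,K}\le C_\sigma|K|$ and summing over the six pairs $(l,j)$ then gives \eqref{eq:ns-error}.

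The one genuinely delicate step is the vanishing of $(\st{\V_m}{K},\div{\v_{h,l}}^j)$ and $(1,\div{\v_{h,l}}^j)$, which is precisely what decouples Step~1 from the later sting corrections; it rests on the bubble factorization of $v_l$ forcing $\nabla v_l(\V_m)=0$. The rest is bookkeeping with the biorthogonality \eqref{eq:ns-Gkw} and affine scaling, where the only point to watch is that the non-sting functions scale like $|K|$ (not $|K|^{1/2}$) in $L^2$, because their degrees of freedom are gradient values, so that the negative and positive powers of $|K|$ cancel in the final bound.
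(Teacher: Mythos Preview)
Your proof is correct and follows essentially the same approach as the paper: test \eqref{eq:pihp0} with the bubbles ${\v_{h,l}}^j$, use the sting quadrature rule \eqref{eq:quadst} together with $\nabla v_l(\V_m)=0$ and $\int_K\div{\v_{h,l}}^j=0$ to strip off the sting and constant parts, and then invoke the biorthogonality \eqref{eq:ns-Gkw} and the scaling $|v_l|_{1}\le C_\sigma$, \eqref{est:nsGK} to conclude. Your explicit bubble factorization of $v_l$ to justify $\nabla v_l(\V_m)=0$ is a welcome elaboration of what the paper merely asserts.
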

\begin{proof}
  Let $V=\{{\v_{h,k}}^i\ :\ k=1,2,3,\ i=1,2\}$.  We can rewrite \eqref{eq:pihp0} for
  $\v_h\in V$ into
  \begin{equation}\label{eq:lem:ns-error-php}
    \left((\Pi_hp)^K, \div \v_h\right) = (\f,\v_h)-(\nabla\u,
    \nabla\v_h)-(p-\Pi_hp,\div\v_h), 
  \end{equation}
  since $(1,\div\v_h)=0$ from \eqref{eq:lem-ns-error-defwk}, \eqref{def:whk}
  and by quadrature rule in \eqref{eq:quadst},
  $$(\st{\V_k}{K},\div\v_h)=|K|\div\v_h(\V_k)=0,\ k=1,2,3.$$
  
If we denote the error by
  $e_h^K=(\Pi_hp)^K-p_h^K$, then from \eqref{eq:q_G}, \eqref{eq:lem:ns-error-php}, it satisfies
  \begin{equation}\label{eq:lem:ns-error-ehk} (e_h^K, \div \v_h) = -(\nabla\u-\nabla\u_h,
    \nabla\v_h)-(p-\Pi_hp,\div\v_h)\quad\mbox{ for all } \v_h\in V.
  \end{equation}
Represent $e_h^K$ with 6 constants $e_{11}, e_{12}, e_{21}, e_{22}, e_{31}, e_{32}$ as
\begin{equation}
  \label{eq:lem-ns-error-spaneh}
  e_h^K= e_{11}{\ns{\G_1}{K}}^1+e_{12}{\ns{\G_1}{K}}^2
    +e_{21} {\ns{\G_2}{K}}^1 +e_{22}{\ns{\G_2}{K}}^2+e_{31}{\ns{\G_3}{K}}^1+e_{32} {\ns{\G_3}{K}}^2.
  \end{equation}
  Then by same argument inducing \eqref{eq:lem-ns-error-sol}, we have
  \begin{equation}\label{eq:lem-ns-error-ehk2}
    e_{ki} = |K|^{-1}g_k^{-1}\left((\nabla\u-\nabla\u_h,\nabla{\v_{h,k}}^i)
      +(p-\php, \div{\v_{h,k}}^i)\right),\quad
    k=1,2,3,\ i=1,2.
  \end{equation}
  We note that  $|v_k|_1 \le C_\sigma,\ k=1,2,3$ from \eqref{eq:lem-ns-error-defwk}.
  Thus, \eqref{eq:ns-error} comes from \eqref{est:nsGK}, \eqref{eq:lem-ns-error-spaneh}, \eqref{eq:lem-ns-error-ehk2}.
\end{proof}
Superpose all the calculated non-sting pressures $\ph^K, K\in\Th$ and define
\begin{equation}\label{def:phns}
  \ph^{\ns{}{}}=\sum_{K\in\Th} \ph^K,\quad (\php)^{\ns{}{}}= \sum_{K\in\Th}(\php)^K.
\end{equation}

\subsection{Step 2. sting pressure for regular vertex}\label{sec:sting-regular}
\subsubsection{test functions in two  triangles}
Let $\V$ be a vertex and $K_1, K_2$ two back-to-back triangles sharing $\V$
as in Figure \ref{fig:union}-(a). Denote by $\W_0,\W_1,\W_2, \t$, other 3 vertices and a unit
tangent vector such that
\[ \overline{\V\W_0}= K_1\cap K_2,\quad \W_j\in K_j\setminus\{\V,\W_0\}, j=1,2,\quad
 \t = \frac{\Evec{\V}{\W_0}}{|\overline{\V\W_0}|}. \]

There exists a function $w\in \mathcal{P}_h^4(\O)\cap H_0^1(\O)$
such that
\begin{equation}\label{cond:w}
  \frac{\p w}{\p\t}(\V)=1,\ \frac{\p w}{\p\t} (\W_0)=0,\ \int_{\overline{\V\W_0}} w\ d\ell=0,\
   \mbox{ the support of  } w \mbox{ is } K_1\cup K_2.
\end{equation}
Assume $K_1,K_2$ are counterclockwisely numbered with respect to $\V$.
Then by simple calculation, we have
  \begin{equation}
    \nabla w\big|_{K_1}(\V) =\frac{|\overline{\V\W_0}|}{2|K_1|}\
    {\Evec{\V}{\W_1}}^{\perp},\quad
    \nabla w\big|_{K_2}(\V) =-\frac{|\overline{\V\W_0}|}{2|K_2|}\
    {\Evec{\V}{\W_2}}^{\perp},
  \end{equation}
  where $(\star)^{\perp}$ denotes the $90^\circ$ counterclockwise
  rotation of vector $\star$.
  Thus if we set a test function $\w_h^{\bxi}=w\bxi=(\xi_1w, \xi_2w)$ for a vector
  $\bxi=(\xi_1,\xi_2)$, it satisfies 
  \begin{equation}\label{eq:divwhval}
    \div\w_h^{\bxi}\big|_{K_1} (\V)=\frac{|\overline{\V\W_0}|}{2|K_1|}\ {\Evec{\V}{\W_1}}^{\perp}\cdot \bxi,\quad
    \div\w_h^{\bxi}\big|_{K_2} (\V)=-\frac{|\overline{\V\W_0}|}{2|K_2|}\ {\Evec{\V}{\W_2}}^{\perp}\cdot \bxi,
  \end{equation}
  and $\div\w_h^{\bxi}$ vanishes at all other vertices.
  
  Let $q_h\in\Mh$ be represented with some constants $\alp_j, \bet_j,j=1,2,3$ as
  \[ q_h =\alp_1\st{\V}{K_1} + \alp_2\st{\W_0}{K_1}+
    \alp_3\st{\W_1}{K_1} +\bet_1\st{\V}{K_2} +
    \bet_2\st{\W_0}{K_2}+ \bet_3\st{\W_2}{K_2},\]
  that is, it is a sum of sting pressures on $K_1\cup K_2$.
  Then, by quadrature
  rule of sting functions in \eqref{eq:quadst}, we expand from \eqref{eq:divwhval} that
  \begin{equation*}
    \arraycolsep=1.0pt\def\arraystretch{2.5}
    \begin{array}{lll}
    (q_h^\V, \div\w_h^{\t})
    &=&|K_1|\div\w_h^{\t}\big|_{K_1}(\V)+|K_2|\div\w_h^{\t}\big|_{K_2}(\V)
    = \disp\frac{1}2\left(\alp_1{\Evec{\V}{\W_1}}^{\perp}
      -\bet_1{\Evec{\V}{\W_2}}^{\perp}\right) \cdot \Evec{\V}{\W_0},\\
     (q_h^\V, \div\w_h^{\t^\perp})
     &=&|K_1|\div\w_h^{\t^\perp}\big|_{K_1}(\V)+|K_2|\div\w_h^{\t^\perp}\big|_{K_2}(\V)
     = \disp\frac{1}2
    \left(\alp_1{\Evec{\V}{\W_1}}^{\perp}-\bet_1{\Evec{\V}{\W_2}}^{\perp}\right) \cdot{\Evec{\V}{\W_0}}^\perp.
    \end{array}
  \end{equation*}
  It can be written in simpler form:
  \begin{equation}\label{eq:qhdivwh0}
    \arraycolsep=1.4pt\def\arraystretch{2.0}
    \begin{array}{lll}
    (q_h^\V, \div\w_h^{\t}) &=&  \disp\frac{\ell_0\ell_1\sin\theta_1}2\alp_1 +\frac{\ell_0\ell_2\sin\theta_2}2\bet_1= |K_1|\alp_1 +|K_2|\bet_1,\\
  (q_h^\V, \div\w_h^{\t^\perp})&=& \disp\frac{\ell_0\ell_1\cos\theta_1}2\alp_1 -\frac{\ell_0\ell_2\cos\theta_2}2\bet_1,
    \end{array}
  \end{equation}
where $\theta_1,\theta_2$ are angles of $K_1, K_2$ at $\V$, respectively, and
$\ell_j=|\overline{\V\W_j}|, j=0,1,2$ as in Figure \ref{fig:union}-(a).

\begin{figure}[ht]
  \centering
  \subfloat[The union of two adjacent triangles sharing $\V$]{
    \includegraphics[width=0.45\textwidth]{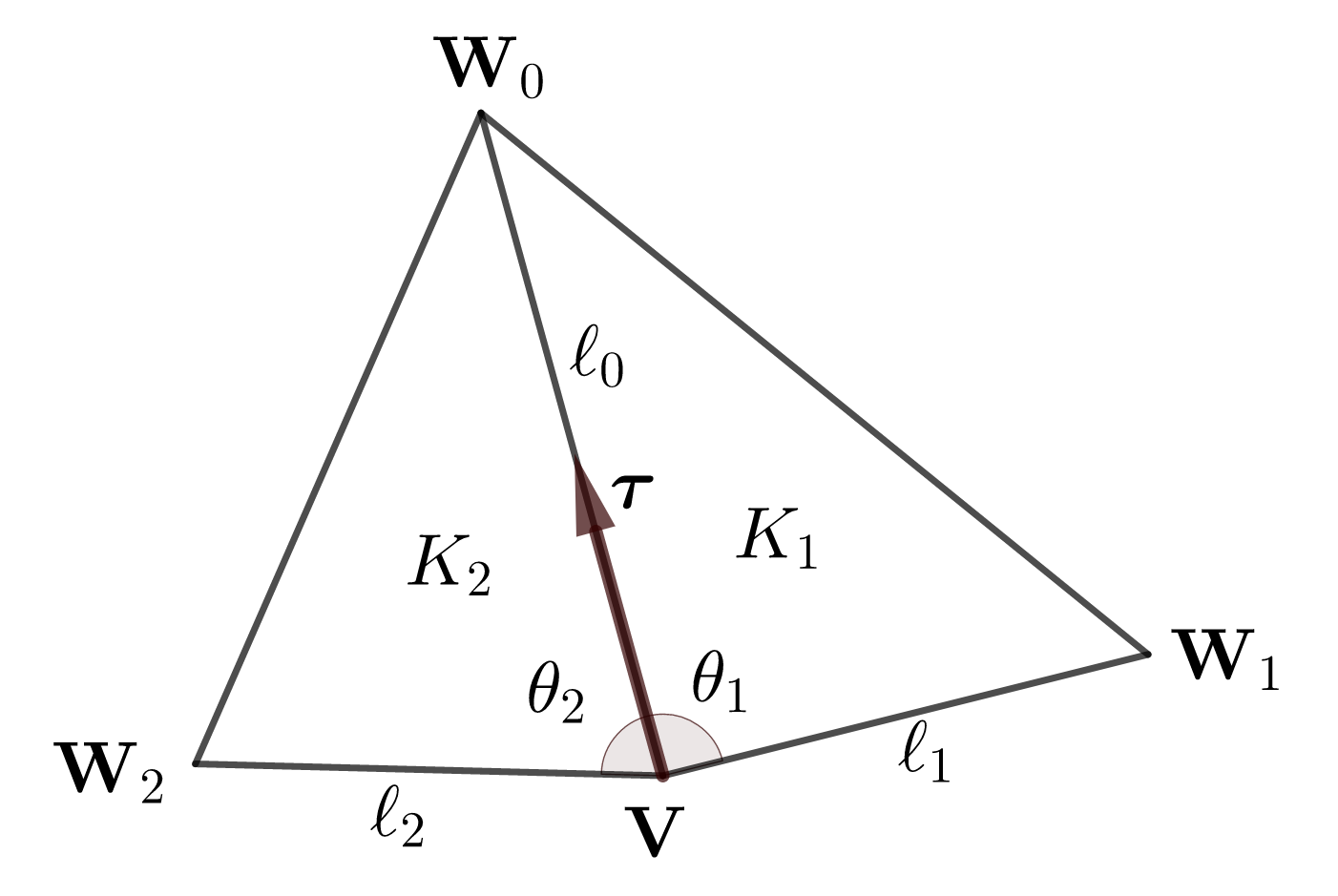} }\qquad
  \subfloat[$\KK(\V)$, the union of all triangles sharing $\V$ ]{
    \includegraphics[width=0.45\textwidth]{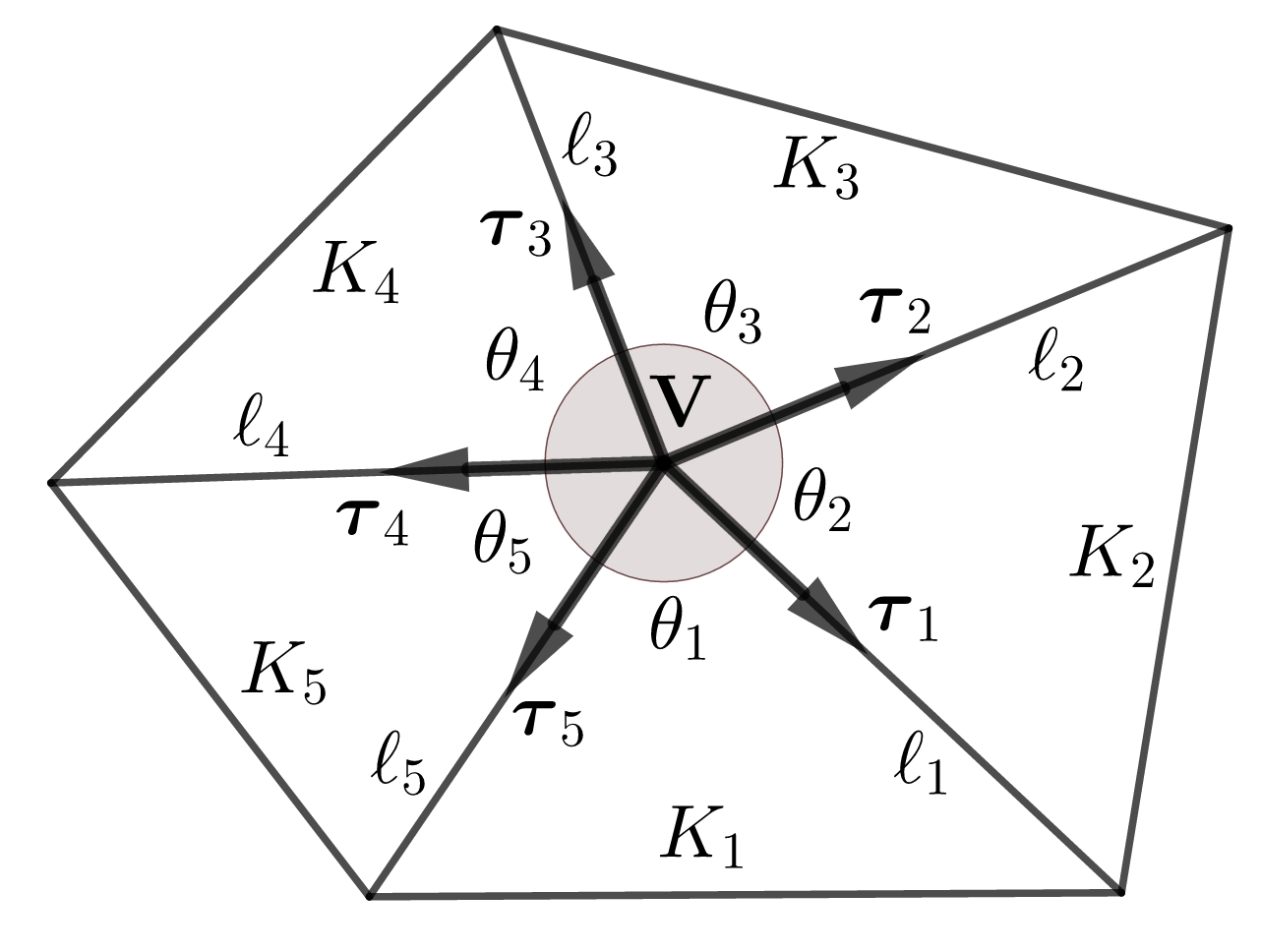}}
  \caption{Unions of triangles}
  \label{fig:union}
\end{figure}

\subsubsection{least square solution}\label{sec:lss}
Let's fix an interior vertex $\V$ and $K_1, K_2, \cdots, K_J$ all triangles in $\Th$
sharing $\V$ as in Figure \ref{fig:union}-(b). We assume the numbering is counterclockwise
and use the index modulo $J$.

For each  $j=1,2,\cdots J$, 
name a vertex $\V_j$ and a unit tangent vector $\t_j$ so that
\begin{equation}\label{not:vktk}
  \overline{\V\V_j}=K_j\cap K_{j+1},\quad \t_j = \frac{\Evec{\V}{\V_j}}{|\overline{\V\V_j}|}.
\end{equation}
There exists a function $w_j\in \mathcal{P}_h^4(\O)\cap H_0^1(\O)$
such that
\begin{equation}\label{cond:wk}
  \frac{\p w_j}{\p\t_j} (\V)=1,\  \frac{\p w_j}{\p\t_j}(\V_j)=0,\
  \int_{\overline{\V\V_j}} w_j\ d\ell=0,\
 \mbox{ the support of  } w_j \mbox{ is } K_j\cup K_{j+1},
\end{equation}
for each  $j=1,2,\cdots J$.
For the uniqueness of $w_j$, we add the following condition:
\begin{equation}\label{cond:wk-redun}
  w_j(\G)=0,\ \nabla w_j(\G)=(0,0) \mbox{ at each gravity center } \G \mbox{ of }
  K_j, K_{j+1}.
\end{equation}
Then we prepare $2J$ test functions in $\Xh$ as
\begin{equation}\label{def:wht}
  \w_h^{\t_j}=w_j\t_j,\quad
  \w_h^{\t_j^\perp}=w_j\t_j^\perp,\quad j=1,2,\cdots,J.
\end{equation}

Let $p_h^{\V}\in \st{h}{}(\V)$, a sum of sting pressures of $\V$,
 be represented as
 \begin{equation}\label{eq:phrepre}
   p_h^\V=\alp_1\st{\V}{K_1}+\alp_2\st{\V}{K_2} +\cdots +
  \alp_J\st{\V}{K_J},
\end{equation}
with $J$ unknown constants $\alp_1, \alp_2,\cdots,\alp_J$.
Consider the following system of $2J$ equations:
\begin{subequations}\label{eq:LSphv}
  \begin{align}
  (p_h^\V, \div\w_h^{\t_j})=(\f,\w_h^{\t_j})-(\nabla\u_h, \nabla\w_h^{\t_j})-(p_h^{\ns{}{}}, \div\w_h^{\t_j}),\label{eq:LSphv-a}\\
    (p_h^\V, \div\w_h^{\t_j^\perp})=(\f,\w_h^{\t_j^\perp})-(\nabla\u_h, \nabla\w_h^{\t_j^\perp})-(p_h^{\ns{}{}}, \div\w_h^{\t_j^\perp}),\label{eq:LSphv-b}
  \end{align}
\end{subequations}
for $j=1,2,\cdots,J$.

\begin{lemma}\label{lem:singval}
  If $\V$ is a regular vertex, then
  the smallest singular value $s$ of the system in \eqref{eq:LSphv} satisfies
  \begin{equation}
    C_\sigma (|K_1|+ |K_2|+\cdots+|K_J|) \le s
  \end{equation}
\end{lemma}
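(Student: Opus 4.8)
The plan is to show that the $2J \times J$ coefficient matrix $M$ of the system \eqref{eq:LSphv}, written in the unknowns $\alp_1,\dots,\alp_J$ via the representation \eqref{eq:phrepre}, has smallest singular value bounded below by $C_\sigma \sum_j |K_j|$. Equivalently, I would prove the coercivity estimate
\[
 C_\sigma\Big(\sum_{j=1}^J |K_j|\Big)^2 \sum_{j=1}^J \alp_j^2
 \le \sum_{j=1}^J \Big[\big(p_h^\V,\div\w_h^{\t_j}\big)^2 + \big(p_h^\V,\div\w_h^{\t_j^\perp}\big)^2\Big]
\]
for every $p_h^\V = \sum_j \alp_j \st{\V}{K_j}\in\st{h}{}(\V)$, since the right-hand side is $\|M\boldsymbol\alp\|^2$ and the left-hand side must dominate $s^2\|\boldsymbol\alp\|^2$; the scaling $\|\boldsymbol\alp\|^2 \sim (\sum|K_j|)^{-2}$ is forced by \eqref{est:stVK} and the fact that the test functions $w_j$ have $|w_j|_1 \le C_\sigma$ but $\div\w_h^{\t_j}$ is $O(1)$ at $\V$ (not $O(h^{-1})$), so each entry of $M$ scales like $|K_j|$.

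The first step is to compute the relevant inner products explicitly. By \eqref{eq:qhdivwh0} applied to the pair $K_j, K_{j+1}$ (which is exactly the back-to-back configuration of Figure \ref{fig:union}-(a) with $\V_j$ playing the role of $\W_0$), one gets
\[
 \big(p_h^\V,\div\w_h^{\t_j}\big) = |K_j|\,\alp_j + |K_{j+1}|\,\alp_{j+1},\qquad
 \big(p_h^\V,\div\w_h^{\t_j^\perp}\big) = \tfrac{\ell \ell_j \cos\theta_j}{2}\alp_j - \tfrac{\ell\ell_{j+1}\cos\theta_{j+1}}{2}\alp_{j+1},
\]
where $\theta_j$ is the angle of $K_j$ at $\V$, the $\ell$'s are the relevant edge lengths, and I am using the index modulo $J$. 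So $M$ is (up to a harmless reordering) a "cyclic bidiagonal" matrix: row $2j-1$ couples only $\alp_j$ and $\alp_{j+1}$ through the $\div$-values, and row $2j$ couples the same two unknowns through the tangential-perpendicular values. The key algebraic observation is that the $2\times 2$ block governing $(\alp_j,\alp_{j+1})$ has determinant
\[
 -\tfrac{\ell\ell_{j+1}\cos\theta_{j+1}}{2}|K_j| - \tfrac{\ell\ell_j\cos\theta_j}{2}|K_{j+1}|
 = -\tfrac{\ell}{4}\ell_j\ell_{j+1}\big(\sin\theta_j\cos\theta_{j+1} + \cos\theta_j\sin\theta_{j+1}\big)
 = -\tfrac{\ell}{4}\ell_j\ell_{j+1}\sin(\theta_j+\theta_{j+1}),
\]
using $|K_j| = \tfrac12\ell\ell_j\sin\theta_j$. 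This vanishes precisely when $\theta_j+\theta_{j+1}=\pi$, i.e. on exactly singular configurations — which is why the regularity hypothesis on $\V$ (namely $|\Theta-\pi|\ge\vts$ for some $\Theta\in\Upsilon(\V)$, with $\Upsilon(\V)\neq\emptyset$) is exactly what is needed.

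The main step is then to turn these per-pair $2\times 2$ determinant bounds into a global lower bound on $s$. Here is the mechanism I would use: pick one index $j_0$ that is regular in the sense $|\theta_{j_0}+\theta_{j_0+1}-\pi|\ge\vts$ (such $j_0$ exists since $\V$ is a regular vertex). The corresponding $2\times 2$ block $B_{j_0}$ has $\det B_{j_0}$ bounded below by $C_\sigma \ell_{j_0}\ell_{j_0+1}\ell\,\sin\vts \ge C_\sigma(|K_{j_0}| + |K_{j_0+1}|)\,h_{\V}$ up to shape-regularity (all edges at $\V$ are comparable), so $B_{j_0}$ controls $\alp_{j_0}$ and $\alp_{j_0+1}$ in terms of the right-hand side. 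Then I run a "chain" argument around the cycle: the $\div$-rows alone, $|K_j|\alp_j + |K_{j+1}|\alp_{j+1} = (M\boldsymbol\alp)_{2j-1}$, let me propagate control from the pair $\{\alp_{j_0},\alp_{j_0+1}\}$ to the neighbors $\alp_{j_0-1}$ and $\alp_{j_0+2}$, and inductively to all $\alp_j$, picking up at each step a factor that is $O(1)$ in the shape-regularity constant (because consecutive $|K_j|$ are comparable). After $J$ steps I have $\sum_j|K_j|^2\alp_j^2 \le C_\sigma\big(\sum_j|K_j|\big)^{?}\|M\boldsymbol\alp\|^2$; tracking the powers of the areas carefully gives exactly the claimed $C_\sigma\big(\sum_j|K_j|\big)^2\|\boldsymbol\alp\|^2 \le \|M\boldsymbol\alp\|^2$. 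I expect the bookkeeping of these area factors — making sure the chain does not accumulate a $J$-dependent or $\sigma$-dependent blow-up, and handling the "modulo $J$" wrap-around consistently — to be the main obstacle; everything else is the explicit $2\times 2$ computation above plus the shape-regularity comparability of edges and areas meeting at $\V$. (A cleaner packaging of the same idea: show directly that $M\boldsymbol\alp = 0$ forces $\boldsymbol\alp=0$ using the determinant nonvanishing at $j_0$ together with the cyclic $\div$-relations, then invoke a compactness/scaling argument on the reference patch to get the quantitative constant; but the direct chain estimate is more in the spirit of the rest of the paper and yields the explicit $C_\sigma$.)
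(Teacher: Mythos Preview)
Your approach is essentially the same as the paper's: both compute the coefficients explicitly via \eqref{eq:qhdivwh0}, identify that the $2\times2$ block for the pair $(K_j,K_{j+1})$ has determinant proportional to $\sin(\theta_j+\theta_{j+1})$, pick one ``good'' index guaranteed by regularity of $\V$, and then propagate along the bidiagonal $\sin$-rows to control the remaining coefficients. The one device the paper uses that you do not is the change of variables $\bet_j=\tfrac12\ell_{j-1}\ell_j\alp_j$, which rescales the system so that all matrix entries are $\sin\theta_j$ or $\pm\cos\theta_j$ (hence $O(1)$) and pushes the area factors into a diagonal matrix; after that the paper simply selects the $J\times J$ subsystem consisting of the first $J-1$ $\sin$-rows together with the single $\cos$-row at the good index, and asserts $\vertiii{A^{-1}}_2\le C_\sigma$. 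This rescaling is exactly what absorbs the ``bookkeeping of area factors'' you flag as the main obstacle, and the bounded-length back-substitution you describe is the content of the bound on $\vertiii{A^{-1}}_2$. (A minor slip: in your determinant computation the common factor should be $\tfrac14\ell_{j-1}\ell_j^2\ell_{j+1}$, not $\tfrac{\ell}{4}\ell_j\ell_{j+1}$; this does not affect the argument.)
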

\begin{proof}
  Let
  $\theta_j$ be an angle of $K_j$ at $\V$ and $\ell_j=|\overline{\V\V_j}|, j=1,2,\cdots,J$ as in Figure \ref{fig:union}-(b). Then
  from \eqref{eq:qhdivwh0}, \eqref{cond:wk}, \eqref{eq:phrepre}, we can rewrite \eqref{eq:LSphv} as,
  for $j=1,2,\cdots,J$,
\begin{subequations}\label{eq:srsys0}
   \begin{align}
   \disp\frac{\ell_{j-1}\ell_j\sin\theta_j}2\alp_j &+ \frac{\ell_{j}\ell_{j+1}\sin\theta_{j+1}}2\alp_{j+1}=(\f,\w_h^{\t_j})-(\nabla\u_h, \nabla\w_h^{\t_j})-(p_h^{\ns{}{}}, \div\w_h^{\t_j}),\label{eq:srsys0-a}\\
      \disp\frac{\ell_{j-1}\ell_j\cos\theta_j}2\alp_j &- \frac{\ell_{j}\ell_{j+1}\cos\theta_{j+1}}2\alp_{j+1}
                                           =(\f,\w_h^{\t_j^\perp})-(\nabla\u_h, \nabla\w_h^{\t_j^\perp})-(p_h^{\ns{}{}}, \div\w_h^{\t_j^\perp})\label{eq:srsys0-b}.                                   
   \end{align}
 \end{subequations}

 Denote $\bet_j=\ell_{j-1}\ell_j\alp_j/2$ and
 the right hand side of \eqref{eq:srsys0-a}, \eqref{eq:srsys0-b}
 by $a_j, b_j$ $ j=1,2,\cdots,J$, respectively.
 Then we rewrite \eqref{eq:srsys0} into
   \begin{subequations}\label{eq:srsys1}
 \begin{align}
   \sin\theta_j\bet_j &+ \sin\theta_{j+1}\bet_{j+1}= a_j ,\quad j=1,2,\cdots, J,\label{eq:srsys1-a}\\
  \cos\theta_j\bet_j &- \cos\theta_{j+1}\bet_{j+1}=b_j,\quad j=1,2,\cdots, J.\label{eq:srsys1-b}   
   \end{align}
 \end{subequations}
 If $\V$ is a regular vertex, we can assume without loss of generality,
 \begin{equation}\label{eq:sinth12}
   C_\sigma \le |\sin(\theta_1+\theta_2)|,
 \end{equation}
 which tells the bound of the determinant of two equations in \eqref{eq:srsys1} for $j=1$.

Let $A\in \R^{J\times J}$ be the matrix of the following subsystem of $J$ equations:
 \begin{subequations}\label{eq:srsys2}
 \begin{align}
   \sin\theta_j\bet_j &+ \sin\theta_{j+1}\bet_{j+1}= a_j ,\quad j=1,2,\cdots,J-1,\label{eq:srsys2-a}\\
  \cos\theta_1\bet_1 &- \cos\theta_{2}\bet_{2}=b_1.\label{eq:srsys2-b}   
   \end{align}
 \end{subequations}
 Then from \eqref{eq:sinth12} and $ C_\sigma \le \sin\theta_j, j=1,2,\cdots,J$, we deduce 
 \[  \vertiii{A^{-1}}_2 \le C_\sigma.\]
 It completes the proof since
 the smallest singular value of $A$ is the reciprocal of  $\vertiii{A^{-1}}_2$ and
 the smallest singular value of \eqref{eq:srsys1} is greater than that of its subsystem \eqref{eq:srsys2}.
 \end{proof}
We note that the sting component $(\Pi_hp)^\V\in \st{h}{}(\V)$ of $\Pi_h p$ 
in \eqref{eq:decom-pihp} and $w_j$ in \eqref{cond:wk} satisfies 
\begin{equation}\label{eq:pihpst}
  (\php, w_{j,x})=\left((\Pi_hp)^{\ns{}{}}, w_{j,x}\right)
    + \left((\Pi_hp)^\V, w_{j,x}\right),\quad j=1,2,\cdots,J,
\end{equation}
since $(1,w_{j,x})_{K_j}=(1,w_{j,x})_{K_{j+1}}=0$ and $w_{j,x}$ vanishes at all vertices
except $\V$.

Let $\ph^\V$ be the least square solution of the system \eqref{eq:LSphv}. Then
 the error $(\Pi_hp)^\V-p_h^\V$ is estimated in the following lemma.
\begin{lemma}\label{lem:error-sr0}
  Let $\V$ be a regular vertex. Then,
  the least square solution $p_h^\V$ of  \eqref{eq:LSphv} satisfies 
  \begin{equation}\label{eq:lem-error-phv}
    \|(\Pi_hp)^\V-p_h^\V\|_{0,\KK(\V)} \le C_\sigma
    \left(|\u-\u_h|_{1,\KK(\V)} 
      +\|p-\Pi_hp\|_{0,\KK(\V)}\right).
  \end{equation}
\end{lemma}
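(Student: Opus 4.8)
The plan is to mimic the structure of the proof of Lemma \ref{lem:ns-error}, obtaining an equation for the error $e_h^\V = (\Pi_hp)^\V - p_h^\V \in \st{h}{}(\V)$ driven by the velocity error and the interpolation error, and then to invert the least-square system using the singular-value bound from Lemma \ref{lem:singval}. First I would write down the defining relation for $(\Pi_hp)^\V$: starting from \eqref{eq:pihp0} tested against $\w_h^{\t_j}$ and $\w_h^{\t_j^\perp}$, and using the decomposition \eqref{eq:decom-pihp} together with \eqref{eq:pihpst}, the $\php$-term splits into its non-sting part and its sting part, while the piecewise-constant part drops because $(1,\div\w_h^{\t_j})=0$ on each $K_j$ (the tangential-derivative test functions integrate to zero along each relevant edge, and their divergences vanish at all vertices but $\V$). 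This yields, for $j=1,\dots,J$,
\begin{equation*}
  \bigl((\Pi_hp)^\V, \div\w_h^{\t_j}\bigr)
  = (\f,\w_h^{\t_j})-(\nabla\u,\nabla\w_h^{\t_j})
    -\bigl((\Pi_hp)^{\ns{}{}},\div\w_h^{\t_j}\bigr)-(p-\php,\div\w_h^{\t_j}),
\end{equation*}
and the analogous identity with $\t_j$ replaced by $\t_j^\perp$.

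Next I would subtract the defining system \eqref{eq:LSphv} for $p_h^\V$, in which the non-sting term involves $\ph^{\ns{}{}}$ rather than $(\Pi_hp)^{\ns{}{}}$. The difference gives a $2J\times J$ system for the coefficients of $e_h^\V$ whose right-hand side is
\begin{equation*}
  -(\nabla\u-\nabla\u_h,\nabla\w_h^{\t_j})
  -\bigl((\Pi_hp)^{\ns{}{}}-\ph^{\ns{}{}},\div\w_h^{\t_j}\bigr)
  -(p-\php,\div\w_h^{\t_j})
\end{equation*}
and similarly for $\t_j^\perp$. Crucially, $(\Pi_hp)^{\ns{}{}}-\ph^{\ns{}{}} = \sum_K\bigl((\Pi_hp)^K-\ph^K\bigr)$ is already controlled, locally on $\KK(\V)$, by Lemma \ref{lem:ns-error}, so this middle term is bounded by $C_\sigma(|\u-\u_h|_{1,\KK(\V)}+\|p-\php\|_{0,\KK(\V)})$ as well. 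Now comes the key point: the matrix of this system is exactly the matrix of \eqref{eq:LSphv} studied in Lemma \ref{lem:singval}, and since $\V$ is regular its smallest singular value $s$ satisfies $s \ge C_\sigma\sum_j|K_j|$. Hence the least-square (normal-equation) solution for the coefficients $\alpha_j$ of $e_h^\V$ obeys $\bigl(\sum_j|K_j|\bigr)^2\sum_j\alpha_j^2 \lesssim \sum_j\bigl(\text{RHS}_j\bigr)^2$.

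It then remains to convert this coefficient bound into the $L^2(\KK(\V))$ bound claimed. For the left side I would use $\|e_h^\V\|_{0,\KK(\V)}^2 = \sum_j\|e_h^\V\|_{0,K_j}^2$, and on each $K_j$ bound $\|e_h^\V\|_{0,K_j}$ in terms of $|\alpha_{j-1}|\|\st{\V}{K_j}\|_{0,K_j}+|\alpha_j|\|\st{\V}{K_j}\|_{0,K_j} \le C_\sigma(|\alpha_{j-1}|+|\alpha_j|)|K_j|^{1/2}$ via the sting estimate \eqref{est:stVK}. For the right side I would bound each inner product using Cauchy--Schwarz, the local estimates $|\w_h^{\t_j}|_{1}\le C_\sigma$ and $\|\div\w_h^{\t_j}\|_{0,K_j\cup K_{j+1}}\le C_\sigma$ (from \eqref{cond:wk}, \eqref{cond:wk-redun}, \eqref{def:wht} and shape regularity, mirroring the estimate $|v_k|_1\le C_\sigma$ used in Lemma \ref{lem:ns-error}), getting $|\text{RHS}_j| \le C_\sigma\bigl(|\u-\u_h|_{1,K_j\cup K_{j+1}}+\|(\Pi_hp)^{\ns{}{}}-\ph^{\ns{}{}}\|_{0,K_j\cup K_{j+1}}+\|p-\php\|_{0,K_j\cup K_{j+1}}\bigr)$. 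Using shape regularity to compare the $|K_j|$ of neighbouring triangles and summing the finitely many ($J\le C_\sigma$) overlapping patches, the weights $|K_j|^{1/2}$ and $\sum_j|K_j|$ cancel appropriately and one arrives at \eqref{eq:lem-error-phv}. The main obstacle I anticipate is the bookkeeping in this last conversion step: keeping the factors of $|K_j|^{1/2}$ versus $\bigl(\sum|K_j|\bigr)^{-1}$ straight across the singular-value inequality, and justifying that the overlap multiplicity of the patches $K_j\cup K_{j+1}$ — and the ratio of areas of adjacent triangles — are bounded purely by $\sigma$; everything else is a routine application of Lemmas \ref{lem:ns-error}, \ref{lem:singval} and the already-recorded estimates \eqref{est:stVK}.
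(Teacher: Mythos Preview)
Your plan is exactly the paper's: derive the identities for $(\Pi_h p)^\V$ from \eqref{eq:pihp0} and \eqref{eq:pihpst}, subtract \eqref{eq:LSphv}, observe that $e_h^\V$ is then the least-squares solution of the difference system (both share the same normal equations), and conclude via Lemma~\ref{lem:singval}, Lemma~\ref{lem:ns-error}, and \eqref{est:stVK}.

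One concrete scaling fix is needed, and it is precisely the bookkeeping you flag as the obstacle. The test functions $w_j$ in \eqref{cond:wk} are normalised by a \emph{derivative} value, $\partial w_j/\partial\t_j(\V)=1$, not by a point value as the $v_k$ in \eqref{eq:lem-ns-error-defwk} were. Under affine scaling this gives
\[
  |w_j|_{1} \le C_\sigma\,(|K_j|+|K_{j+1}|)^{1/2},
\]
which is the bound the paper uses, not $|w_j|_1\le C_\sigma$. That extra factor of order $|K|^{1/2}$ is exactly what balances the $(\sum_j|K_j|)^{-1}$ coming from Lemma~\ref{lem:singval} against the $|K_j|^{1/2}$ from \eqref{est:stVK}; with your stated bound the final estimate would be off by a factor $h^{-1}$. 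A second minor slip: on each $K_j$ one has simply $e_h^\V|_{K_j}=\alpha_j\,\st{\V}{K_j}$, since each sting function $\st{\V}{K_j}$ is supported in $K_j$ alone, so there is no $\alpha_{j-1}$ term.
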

\begin{proof}
  From \eqref{eq:pihp0}, \eqref{eq:pihpst} and the definition of  $\w_h^{\t_j}, \w_h^{\t_j^\perp}$
  in \eqref{cond:wk}, \eqref{def:wht}, we have
 \begin{equation}\label{eq:Piphvtk}
 \resizebox{.93\hsize}{!}
  {$
    \arraycolsep=1.2pt\def\arraystretch{1.8}
    \begin{array}{l}
   \left((\Pi_hp)^\V, \div\w_h^{\t_j}\right)=(\f,\w_h^{\t_j})-(\nabla\u, \nabla\w_h^{\t_j})
   -\left((\Pi_hp)^{\ns{}{}}, \div\w_h^{\t_j}\right)-(p-\Pi_hp,\div\w_h^{\t_j}),\\
  \left((\Pi_hp)^\V, \div\w_h^{\t_j^\perp}\right)
  =(\f,\w_h^{\t_j^\perp})-(\nabla\u, \nabla\w_h^{\t_j^\perp})
   -\left((\Pi_hp)^{\ns{}{}}, \div\w_h^{\t_j^\perp}\right)-(p-\Pi_hp,\div\w_h^{\t_j^\perp}),
    \end{array}
    $}
 \end{equation}
 for $j=1,2,\cdots, J$.
 
If we denote the error by $e_h^\V=(\Pi_hp)^\V - p_h^\V$,
then from \eqref{eq:LSphv},\eqref{eq:Piphvtk}, $e_h^\V$ is the least square solution of the following system of $2J$ equations:
\begin{equation}\label{eq:ehvtk}
  \resizebox{.93\hsize}{!}
  {$
  \arraycolsep=1.4pt\def\arraystretch{1.6}
  \begin{array}{l}
   (e_h^\V, \div\w_h^{\t_j})=-\left(\nabla(\u-\u_h), \nabla\w_h^{\t_j}\right)
   -\left((\Pi_hp)^{\ns{}{}}-p_h^{\ns{}{}}, \div\w_h^{\t_j}\right)-(p-\Pi_hp,\div\w_h^{\t_j}),\\
   (e_h^\V, \div\w_h^{\t_j^\perp})=-\left(\nabla(\u-\u_h), \nabla\w_h^{\t_j^\perp}\right)
   -\left((\Pi_hp)^{\ns{}{}}-p_h^{\ns{}{}}, \div\w_h^{\t_j^\perp}\right)
   -(p-\Pi_hp,\div\w_h^{\t_j^\perp}),
  \end{array}
$}
\end{equation} 
 for $j=1,2,\cdots, J$, since the least square solution  of a system is that of its normal equation.
 Now, \eqref{eq:lem-error-phv} comes from  \eqref{est:stVK}, \eqref{eq:ehvtk},
 Lemma \ref{lem:ns-error}, \ref{lem:singval} and 
 $$|w_j|_1\le C_\sigma(|K_j|+|K_{j+1}|)^{1/2},\quad j=1,2,\cdots,J.$$
\end{proof}
Even in case of  regular vertices on $\p\O$, we can copy the above argument to establish
Lemma   \ref{lem:singval} and \ref{lem:error-sr0} with a system of $2(J-1)$ equations.

Denote by $\dotone{\ph}$, the superposition of all the calculated pressures up to now,
that is,
\begin{equation}\label{def:dotone}
\dotone{\ph} = \ph^{\ns{}{}} + \sum_{\V\in\RR_h} \ph^\V,
\end{equation}
where $\RR_h$ is a set of all regular vertices.

\subsection{Step 3. sting pressure for nearly singular vertex except corners}\label{sec:sting-sing}
When a vertex $\V$ is exactly singular, the system \eqref{eq:LSphv} is underdetermined, since
the determinant of \eqref{eq:srsys1} makes
\[ -\sin(\theta_j +\theta_{j+1}) =0\quad \mbox{ for all } j=1,2,\cdots,J.\]
Although $\V$ is not exactly singular, the error $e_h^\V$ in \eqref{eq:ehvtk}
goes through a tiny smallest singular value, if it is nearly singular.

To overcome the problem on nearly singular vertices,
we will replace equations in \eqref{eq:LSphv-b}
with new equations
utilizing jumps of $\dotone\ph$  in \eqref{def:dotone} we have already calculated.  

\subsubsection{boundary singular vertex not a corner}
Let's fix a boundary vertex $\V$
which is not a corner point of $\p\O$. If $\V$ is nearly singular,
it is exact and has only two triangle $K_1, K_2$ which share $\V$ as in Figure \ref{fig:sing}-(a).  Denote by $\V_0,\V_1,\V_2, \t_1$, other 3 vertices and a unit
tangent vector such that
\[ \overline{\V\V_1}= K_1\cap K_2,\quad \V_2\in K_2\setminus\{\V,\V_1\}, \quad
  \V_0\in K_1\setminus\{\V,\V_1\}, \quad
 \t_1 = \frac{\Evec{\V}{\V_1}}{|\overline{\V\V_1}|}. \]

Define the jump of a function $q_h$  across $K_1\cap K_2$ at $\V$ as
\begin{equation}\label{def:jump12}
  \jump_{12}(q_h)
  ={|K_1\cap K_2|^3}\left(\frac{\p}{\p \t_1} \left(q_h\big|_{K_1}\right) (\V) - \frac{\p}{\p \t_1}\left(q_h\big|_{K_2}\right) (\V)\right).
\end{equation}
If $\V$ is nearly singular, then instead of \eqref{eq:LSphv-b}, consider the following new system of $2$ equations:
  \begin{subequations}\label{eq:SSbdphv} 
  \begin{align}
  (p_h^\V, \div\w_h^{\t_1})&=(\f,\w_h^{\t_1})-(\nabla\u_h, \nabla\w_h^{\t_1})-(p_h^{\ns{}{}}, \div\w_h^{\t_1}), \label{eq:SSbdphv-a}\\
   \jump_{12} (p_h^\V) &= -\jump_{12}(\dotone{p_h}).\label{eq:SSbdphv-b}
  \end{align}
\end{subequations}
\begin{lemma}\label{lem:phv-error-svb}
  Let $\V$ be a boundary nearly singular vertex, not a corner of $\p\O$.
  If $p_h^\V$ is the solution of \eqref{eq:SSbdphv}, then we estimate
\begin{equation}\label{eq:lem-error-phvs-svb}
    \|(\Pi_hp)^\V-p_h^\V\|_{0,K_1\cup K_2} \le C_\sigma
    \left(|\u-\u_h|_{1,K_1\cup K_2} +\|p-\Pi_hp\|_{0,K_1\cup K_2}\right).
  \end{equation}
\end{lemma}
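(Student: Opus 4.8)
The plan is to transcribe the proof of Lemma~\ref{lem:error-sr0}, the only new ingredients being that the linear system is now $2\times2$ and that one of its equations is the jump condition \eqref{eq:SSbdphv-b}. Since $\V$ is boundary, nearly singular and not a corner, it is exactly singular and only $K_1,K_2$ share it, so $\st{h}{}(\V)=\langle\st{\V}{K_1},\st{\V}{K_2}\rangle$ is two dimensional; writing $p_h^\V=\alp_1\st{\V}{K_1}+\alp_2\st{\V}{K_2}$, the system \eqref{eq:SSbdphv} is square in $(\alp_1,\alp_2)$, and I would first show it is boundedly invertible. By \eqref{eq:qhdivwh0} with $\t=\t_1$, equation \eqref{eq:SSbdphv-a} reads $|K_1|\alp_1+|K_2|\alp_2=(\text{bounded})$; and from the explicit reference sting functions \eqref{def:sting} together with the affine change of variables, $\frac{\p}{\p\t_1}\st{\V}{K_i}(\V)$ is a fixed nonzero multiple of $|K_1\cap K_2|^{-1}$, so by \eqref{def:jump12} equation \eqref{eq:SSbdphv-b} reads $c\,|K_1\cap K_2|^{2}(\alp_1-\alp_2)=-\jump_{12}(\dotone{p_h})$ with $c\neq0$. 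The determinant is $c\,|K_1\cap K_2|^{2}(|K_1|+|K_2|)$; since $|K_1\cap K_2|^{2}\simeq_\sigma|K_1|+|K_2|$ by shape regularity and all matrix entries are $\le C_\sigma(|K_1|+|K_2|)$, the smallest singular value $s$ satisfies $s\ge C_\sigma(|K_1|+|K_2|)$. This is the analogue of Lemma~\ref{lem:singval} and gives existence and uniqueness of $p_h^\V$; note the cubic weight in \eqref{def:jump12} is chosen precisely so that the jump equation scales like the divergence equation.

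I would then derive the $2\times2$ system satisfied by $e_h^\V:=(\php)^\V-p_h^\V$. For the first equation, start from \eqref{eq:pihp0} with $\v=\w_h^{\t_1}$. Since $\w_h^{\t_1}=w_1\t_1$ vanishes on $\p(K_1\cup K_2)$ and, by \eqref{cond:w}, its tangential derivative vanishes at the other vertices of $K_1\cup K_2$, the scalar $\div\w_h^{\t_1}$ vanishes at every vertex except $\V$; hence, by the quadrature rule \eqref{eq:quadst}, the constant part and all sting parts of $\php$ except $(\php)^\V$ drop out, exactly as in \eqref{eq:pihpst}. Subtracting \eqref{eq:SSbdphv-a} and inserting $\pm\ph^{\ns{}{}}$ yields
\[
(e_h^\V,\div\w_h^{\t_1})=-(\nabla(\u-\u_h),\nabla\w_h^{\t_1})-\big((\php)^{\ns{}{}}-\ph^{\ns{}{}},\div\w_h^{\t_1}\big)-(p-\php,\div\w_h^{\t_1}).
\]
For the second equation, $\php$ is a Hermite interpolant, so $\nabla\php(\V)$ is single valued and $\jump_{12}(\php)=0$; decomposing $\php$ as in \eqref{eq:decom-pihp}, using that only the stings of $\V$ and of the (at most three) other vertices of $K_1\cup K_2$ meet $K_1\cup K_2$, and applying \eqref{eq:SSbdphv-b} with \eqref{def:dotone}, I would get
\[
\jump_{12}(e_h^\V)=-\jump_{12}\big((\php)^{\ns{}{}}-\ph^{\ns{}{}}\big)-\sum_{\V'}\jump_{12}\big((\php)^{\V'}-q_h^{\V'}\big),
\]
the sum over the other vertices $\V'$ of $K_1\cup K_2$, where $q_h^{\V'}=p_h^{\V'}$ if $\V'$ is regular (already contained in $\dotone{p_h}$) and $q_h^{\V'}=0$ otherwise.

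Finally I would bound the right–hand sides and divide by $s\ge C_\sigma(|K_1|+|K_2|)$. For the $\w_h^{\t_1}$ terms, $|w_1|_1\le C_\sigma(|K_1|+|K_2|)^{1/2}$ as in Lemma~\ref{lem:error-sr0}, and Lemma~\ref{lem:ns-error} controls $(\php)^{\ns{}{}}-\ph^{\ns{}{}}$ triangle by triangle. For the jump terms, the inverse estimate $|\nabla r(\V)|\le C_\sigma|K|^{-1}\|r\|_{0,K}$ for $r\in P^3(K)$, combined with $|K_1\cap K_2|^{3}\simeq_\sigma(|K_1|+|K_2|)^{3/2}$, gives $|\jump_{12}(r)|\le C_\sigma(|K_1|+|K_2|)^{1/2}\|r\|_{0,K_1\cup K_2}$; then Lemma~\ref{lem:ns-error} (non-sting error) and Lemma~\ref{lem:error-sr0} (regular–vertex sting errors) close those contributions, the patch on the right being $K_1\cup K_2$ enlarged by the finitely many triangles met by the relevant components, absorbed into $C_\sigma$. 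The one genuinely new term is $\jump_{12}\big((\php)^{\V'}\big)$ for a neighbour $\V'$ that is itself nearly singular, which is not cancelled by $\dotone{p_h}$: here one needs that the sting component of a Hermite interpolant of $p\in H^4$ is of interpolation–error size, $\|(\php)^{\V'}\|_{0,K_1\cup K_2}\le C_\sigma\|p-\php\|_{0,\mathrm{patch}}$, obtained by extracting the sting coefficient through \eqref{eq:quadst} and using that $p$ and $\nabla p$ are matched at the shared vertices, with Lemma~\ref{lem:isol-vtx} and Assumption~\ref{asm:Th} keeping the geometry of the overlapping patches under control. This estimate, together with the conditioning bound of the first paragraph, is where I expect the real work to lie; everything else follows the argument for Lemma~\ref{lem:error-sr0}.
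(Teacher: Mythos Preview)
Your outline follows the paper's argument closely, and the first two paragraphs (invertibility of the $2\times2$ system, derivation of the error equations) are fine. The gap is in your treatment of the jump equation for $e_h^\V$, specifically the ``genuinely new term'' $\jump_{12}\big((\php)^{\V'}\big)$ for a neighbour $\V'$ that might be nearly singular.

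This term does not exist. Look at \eqref{def:sting}: the sting function $\st{\V_0}{K_1}$ depends only on the barycentric coordinate of $\V_0$ in $K_1$, so along the opposite edge $K_1\cap K_2=\overline{\V\V_1}$ it is constant (equal to $-10$); the same holds for $\st{\V_2}{K_2}$. Hence their tangential derivatives along $K_1\cap K_2$ vanish and they contribute nothing to $\jump_{12}$. Consequently the only neighbouring vertex that enters $\jump_{12}(\php-(\php)^\V)$ and $\jump_{12}(\dotone p_h)$ is $\V_1$, and the paper checks that $\V_1$ is necessarily regular: if $\V_1$ is interior this is Lemma~\ref{lem:isol-vtx}, and if $\V_1$ lies on $\p\O$ it is regular by Assumption~\ref{asm:Th} (otherwise $\O=K_1\cup K_2$). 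So $p_h^{\V_1}$ is already available in $\dotone p_h$, and the jump reduces to
\[
\jump_{12}\big(\php-(\php)^\V-\dotone p_h\big)=\jump_{12}\big((\php)^{\ns{}{}}-p_h^{\ns{}{}}\big)+\jump_{12}\big((\php)^{\V_1}-p_h^{\V_1}\big),
\]
which you can bound exactly as you describe, via your inverse estimate on $\jump_{12}$ together with Lemmas~\ref{lem:ns-error} and~\ref{lem:error-sr0}. Your proposed fallback bound $\|(\php)^{\V'}\|_{0}\le C_\sigma\|p-\php\|_{0}$ is neither needed nor correct: the sting component of $\php$ at a vertex carries genuine $O(1)$ information about $p$ and is not of interpolation-error size.
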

\begin{proof}
  Since $\php$ is continuous on $K_1\cap K_2$, we have $\jump_{12}(\php)=0$. It is written in
\begin{equation}\label{eq:jump-php}
  \jump_{12}\left((\php)^\V\right)=-\jump_{12}\left(\php-(\php)^\V\right).
\end{equation}
Thus, if we denote the error by $e_h^\V=(\Pi_hp)^\V - p_h^\V$,
by same argument in regular case, we get
\begin{equation}\label{eq:ehvtk-sbd}
  \resizebox{.93\hsize}{!}
  {$
  \arraycolsep=1.4pt\def\arraystretch{1.6}
  \begin{array}{rll}
   (e_h^\V, \div\w_h^{\t_1})&=&-\left(\nabla(\u-\u_h), \nabla\w_h^{\t_1}\right)
   -\left((\Pi_hp)^{\ns{}{}}-p_h^{\ns{}{}}, \div\w_h^{\t_1}\right)-(p-\Pi_hp,\div\w_h^{\t_1}),\\
    \jump_{12}(e_h^\V)&=&-\jump_{12}\left(\php-(\php)^\V-\dotone{p_h}\right).
  \end{array}
$}
\end{equation} 

We can represent $e_h^\V$ with $2$ unknown constants $e_1, e_2$ as 
\begin{equation}\label{def:ehv-bs}
e_h^\V=e_1 \st{\V}{K_1} + e_2\st{\V}{K_2}.
\end{equation}
Let $\theta_1,\theta_2$ be angles of $K_1, K_2$ at $\V$, respectively, and
$\ell_j=|\overline{\V\V_j}|, j=0,1,2$ as in Figure \ref{fig:sing}-(a).
Then, from the property of sting functions in \eqref{def:sting},
we have
\begin{equation}\label{eq:J12-repre}
    J_{12} (e_h^\V)= -600{\ell_1}^2e_1 +600{\ell_1}^2e_2.
  \end{equation}
  If the first right hand side in \eqref{eq:ehvtk-sbd} is abbreviated by $a$,
  we can rewrite \eqref{eq:ehvtk-sbd} into
\begin{equation}\label{eq:ehvtk-sbd-2}
 \begin{array}{ccccl}
   \disp\frac{\ell_0\ell_1\sin\theta_1}2e_1 &+&\disp
                                                \frac{\ell_{1}\ell_{2}\sin\theta_{2}}2e_{2} &=&a, \\
 -600{\ell_1}^2e_1 &+&600{\ell_1}^2e_2 &=&-\jump_{12}\left(\php-(\php)^\V-\dotone{p_h}\right).
 \end{array}
\end{equation}
We can repeat the same in regular case for the estimation:
\begin{equation}\label{est:norm-a} |a|\le  C_\sigma \left(|K_1|+|K_2|\right)^{1/2}
  \left(|\u-\u_h|_{1,K_1\cup K_2} +\|p-\Pi_hp\|_{0,K_1\cup K_2}\right).
\end{equation}
If we have the following similar estimation for the second right hand side in \eqref{eq:ehvtk-sbd-2}:
\begin{equation}\label{est:norm-jump}
\resizebox{0.93\hsize}{!}
{$
 |\jump_{12}\left(\php-(\php)^\V-\dotone{p_h}\right) |\le  C_\sigma \left(|K_1|+|K_2|\right)^{1/2}
  \left(|\u-\u_h|_{1,K_1\cup K_2} +\|p-\Pi_hp\|_{0,K_1\cup K_2}\right),
$}
\end{equation}
the estimation \eqref{eq:lem-error-phvs-svb} will be established
from \eqref{est:stVK}, \eqref{def:ehv-bs}-\eqref{est:norm-jump}.

If $\V_1$ is an interior vertex, it is regular by Lemma \ref{lem:isol-vtx}.
Even in case of a boundary vertex $\V_1$, it is regular unless $\O=K_1\cup K_2$.
We exclude that pathological case by Assumption \ref{asm:Th}.
Thus, we conclude that $\V_1$ is a regular vertex.

We also note that $\st{\V_0}{K_1}$, $\st{\V_2}{K_2}$ are constant
on $K_1\cap K_2$ from \eqref{def:sting}. It means that they do not have any effect on
 $\jump_{12}(\php)$ and $\jump_{12}(\dotone\ph)$.
 It results in 
\begin{equation}\label{eq:jump-php-svb}
  \jump_{12}\left(\php-(\php)^\V\right) =\jump_{12}\left((\php)^{\ns{}{}}+(\php)^{\V_1}\right).
\end{equation}
Similarly, whether $\V_0, \V_2$ are regular or not, we have
\begin{equation}\label{eq:jump-ph-svb}
\jump_{12}(\dotone{\ph})= \jump_{12}(\ph^{\ns{}{}}+p_h^{\V_1}). 
\end{equation}
From  \eqref{eq:jump-php-svb}, \eqref{eq:jump-ph-svb}, we write
\begin{equation}\label{eq:jumpeh-svb}
  \jump_{12}\left(\php-(\php)^\V-\dotone{p_h}\right)
  =\jump_{12}\left((\php)^{\ns{}{}}- \ph^{\ns{}{}}\right)
  +\jump_{12}\left( (\php)^{\V_1}-\ph^{\V_1}\right).
\end{equation}

For each function $q_h\in\Mh$,  we can estimate
\begin{equation}\label{est:jumpqh}
    \arraycolsep=1.5pt\def\arraystretch{2.0}
  \begin{array}{lll}
   \left| \jump_{12}\left( q_h\right)\right| &\le&
{\ell_1}^3  \left( \left|\left.\nabla q_h \right|_{K_1}(\V)\right|
           +  \left|\left.\nabla q_h \right|_{K_2}(\V)\right| \right)  
                                                           \le C_\sigma{\ell_1}^3 \left(|K_1|+|K_2|\right)^{-1/2}\left|q_h\right|_{1,K_1\cup K_2}\\
                                             &\le& C_\sigma{\ell_1}^2 \left|q_h\right|_{1,K_1\cup K_2}\le C_\sigma{\ell_1} \left\|q_h\right\|_{0,K_1\cup K_2}  \le C_\sigma \left(|K_1|+|K_2|\right)^{1/2}\left\|q_h\right\|_{0,K_1\cup K_2}.
  \end{array}                                                        
\end{equation}
Thus we obtain \eqref{est:norm-jump} from \eqref{eq:jumpeh-svb}, \eqref{est:jumpqh} and
Lemma \ref{lem:ns-error}, \ref{lem:error-sr0}.
\end{proof}
\begin{figure}[ht]
  \hspace{10mm}
  \subfloat[$\V\in\p\O\setminus\{\mbox{corners of }\p\O\}$]{
    \includegraphics[width=0.38\textwidth]{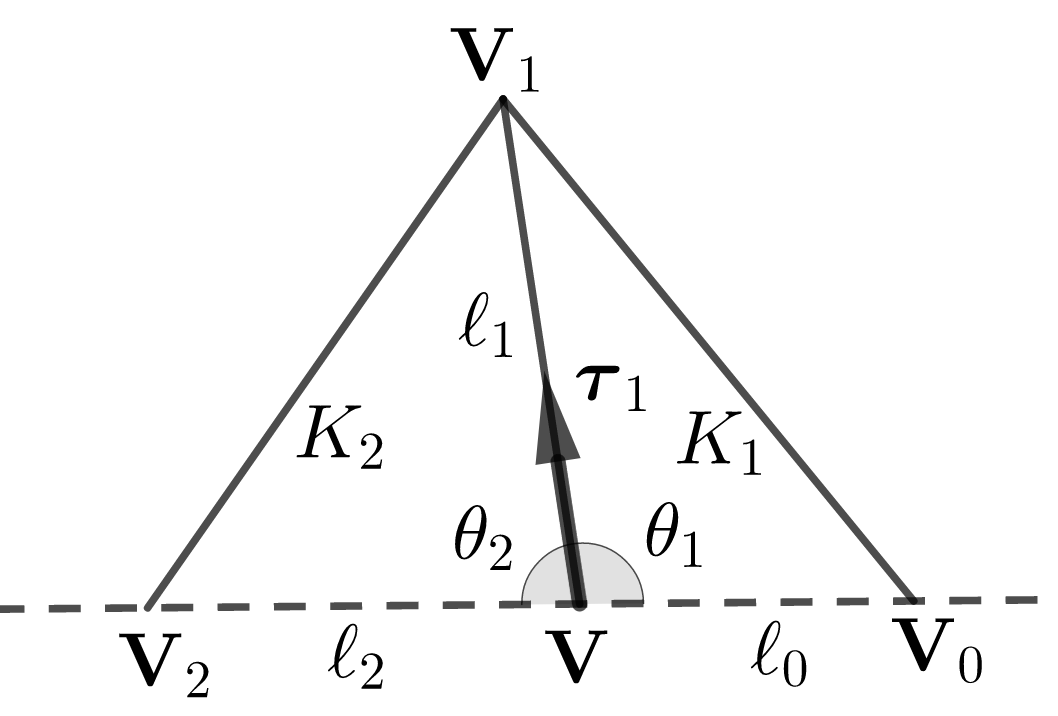} }\qquad
  \subfloat[ a corner $\V$ of $\p\O$.]{
    \includegraphics[width=0.38\textwidth]{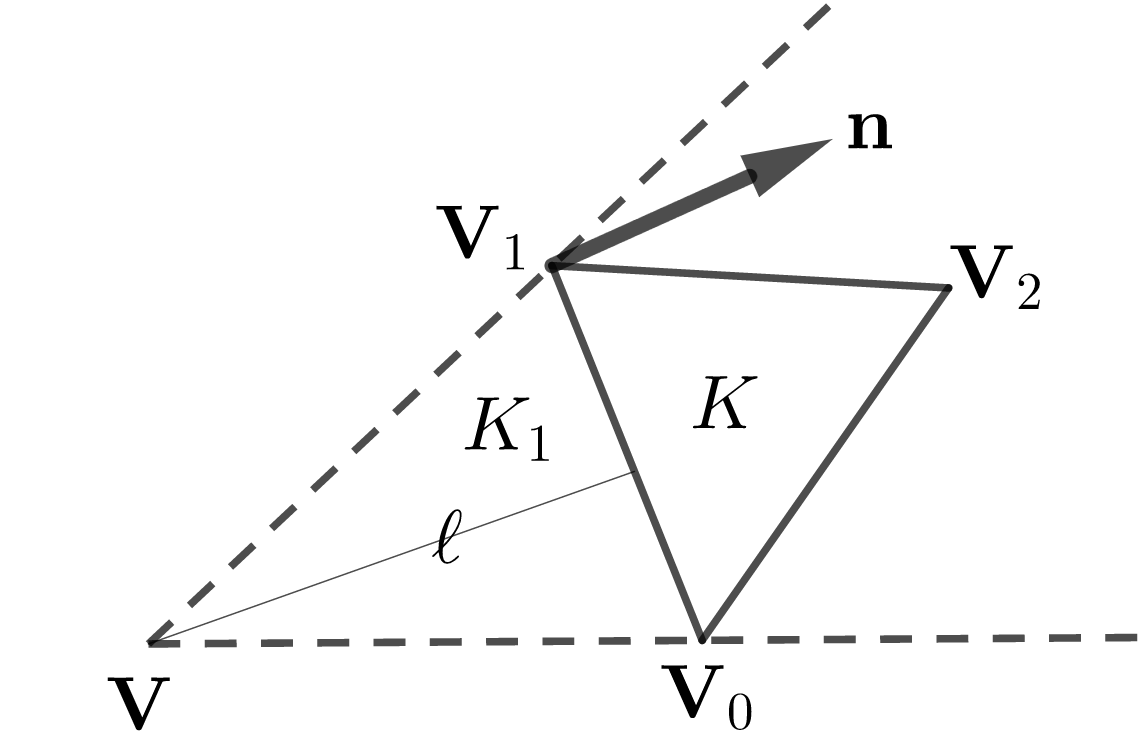}}
  \caption{examples of boundary singular vertices (dashed lines belong to $\p\O$) }
  \label{fig:sing}
\end{figure}

\subsubsection{interior singular vertex}
Let $\V$ be an interior vertex and go back to the setting in Section \ref{sec:lss}
as in Figure \ref{fig:union}-(b).
For each $j=1,2,\cdots,J$, define the jump of a function $q_h$  across $K_j\cap K_{j+1}$ at $\V$ as
\begin{equation}\label{def:jumpkk1}
  \jump_{\jjp}(q_h)
  ={|K_j\cap K_{j+1}|^3}\left(\frac{\p}{\p \t_j} \left(q_h\big|_{K_j}\right) (\V) - \frac{\p}{\p \t_j}\left(q_h\big|_{K_{j+1}}\right) (\V)\right).
\end{equation}
If $\V$ is nearly singular,
replace \eqref{eq:LSphv-b} with new equations using the jumps of $\dotone\ph$ in \eqref{def:jumpkk1}. That is, we consider the following system of $2J$ equations:
  \begin{subequations}\label{eq:SSphv-svi} 
  \begin{align}
  (p_h^\V, \div\w_h^{\t_j})&=(\f,\w_h^{\t_j})-(\nabla\u_h, \nabla\w_h^{\t_j})-(p_h^{\ns{}{}}, \div\w_h^{\t_j}), \label{eq:SSphv-svi-a}\\
    \jump_{\jjp} (p_h^\V) &= -\jump_{\jjp}(\dotone{p_h}),\quad
                            j=1,2,\cdots,J.\label{eq:SSphv-ssi-b}
  \end{align}
\end{subequations}

Let $\ph^\V$ be the least square solution of \eqref{eq:SSphv-svi}.
We note that  all other adjacent vertices $\V_1, \V_2,\cdots,\V_J$
are regular from Lemma \ref{lem:isol-vtx}.
Thus we can repeat the arguments
in the proofs of Lemma \ref{lem:singval}, \ref{lem:error-sr0}, \ref{lem:phv-error-svb}
to establish the following lemma.
\begin{lemma}\label{lem:phv-error-svi}
  Let $\V$ be an interior nearly singular vertex.
  If $p_h^\V$ be the least square solution of \eqref{eq:SSphv-svi}, then we estimate
\begin{equation}\label{eq:lem-error-phvs}
    \|(\Pi_hp)^\V-p_h^\V\|_{0,\KK(\V)} \le C_\sigma
    \left(|\u-\u_h|_{1,\KK(\V)} +\|p-\Pi_hp\|_{0,\KK(\V)}\right).
  \end{equation}
\end{lemma}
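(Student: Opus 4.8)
The plan is to transcribe, in the interior configuration of Figure~\ref{fig:union}-(b), the three ingredients already in place — Lemma~\ref{lem:singval} and Lemma~\ref{lem:error-sr0} for regular vertices, and the jump device of Lemma~\ref{lem:phv-error-svb} for singular ones — the only genuinely new point being the robustness of the jump-modified system at a nearly singular $\V$. First I would write down the error equations for $e_h^\V=(\Pi_hp)^\V-p_h^\V$: subtracting \eqref{eq:SSphv-svi-a} from \eqref{eq:pihp0} tested against $\w_h^{\t_j}$, exactly as in the passage from \eqref{eq:LSphv} to \eqref{eq:ehvtk}, gives for $j=1,\dots,J$
\[ (e_h^\V,\div\w_h^{\t_j})=-(\nabla(\u-\u_h),\nabla\w_h^{\t_j})-((\Pi_hp)^{\ns{}{}}-p_h^{\ns{}{}},\div\w_h^{\t_j})-(p-\Pi_hp,\div\w_h^{\t_j}), \]
while continuity of $\php$ across each $K_j\cap K_{j+1}$ gives $\jump_{\jjp}(\php)=0$, whence, as in \eqref{eq:jump-php}--\eqref{eq:jumpeh-svb},
\[ \jump_{\jjp}(e_h^\V)=-\jump_{\jjp}((\php)^{\ns{}{}}-p_h^{\ns{}{}})-\jump_{\jjp}((\php)^{\V_j}-p_h^{\V_j}). \]
Reducing the jump right-hand side to just the $\ns{}{}$-part and the single neighbouring sting pressure at $\V_j$ uses that, by \eqref{def:sting}, a sting function centred at a vertex of $K_j$ or $K_{j+1}$ other than $\V$ or $\V_j$ is constant along the edge $\overline{\V\V_j}=K_j\cap K_{j+1}$ and hence invisible to $\jump_{\jjp}$, and that the $\mathcal{P}_h^0(\O)$-part is invisible too; this is where the decomposition \eqref{eq:decom-ph3}--\eqref{eq:decom-pihp} enters.

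Next I would invoke Lemma~\ref{lem:isol-vtx}: since $\V$ is nearly singular, every neighbour $\V_1,\dots,\V_J$ is regular, so $(\php)^{\V_j}-p_h^{\V_j}$ is already controlled by Lemma~\ref{lem:error-sr0} and $(\php)^{\ns{}{}}-p_h^{\ns{}{}}$ by Lemma~\ref{lem:ns-error}. Feeding these into the jump estimate \eqref{est:jumpqh} on each pair $K_j\cup K_{j+1}$, and using $\|\st{\V}{K_j}\|_{0,K_j}\le C_\sigma|K_j|^{1/2}$ from \eqref{est:stVK} together with $|w_j|_1\le C_\sigma(|K_j|+|K_{j+1}|)^{1/2}$, I obtain that the $2J$-vector of right-hand sides above has Euclidean norm at most $C_\sigma(|K_1|+\cdots+|K_J|)^{1/2}\big(|\u-\u_h|_{1,\KK(\V)}+\|p-\Pi_hp\|_{0,\KK(\V)}\big)$.

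It then remains to bound the coefficient vector $(\alp_1,\dots,\alp_J)$ of $e_h^\V$ in the sting basis \eqref{eq:phrepre}, i.e.\ to prove the analogue of Lemma~\ref{lem:singval} for the system \eqref{eq:SSphv-svi}. As there, I pass to $\beta_j=\ell_{j-1}\ell_j\alp_j/2$: the div-equations become the sine relations $\sin\theta_j\beta_j+\sin\theta_{j+1}\beta_{j+1}=a_j$ of \eqref{eq:srsys1-a}, while each jump equation, using the reference values \eqref{def:sting} (the tangential derivative of $\st{\V}{K_j}$ at $\V$ along $\overline{\V\V_j}$ equals $-600/\ell_j$, cf.\ \eqref{eq:J12-repre}), becomes $-1200(\ell_j/\ell_{j-1})\beta_j+1200(\ell_j/\ell_{j+1})\beta_{j+1}=$ (controlled right-hand side), indices taken modulo $J$. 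Forming the $J\times J$ subsystem made of the sine relations for $j=1,\dots,J-1$ together with the $j=1$ jump equation, its $2\times2$ block coupling $\beta_1,\beta_2$ has determinant $1200\,\ell_1(\sin\theta_1/\ell_2+\sin\theta_2/\ell_0)$, a sum of two positive terms, hence bounded below by $C_\sigma$ after the shape-regularity rescaling; back-substitution along the remaining (banded) rows exactly as in Lemma~\ref{lem:singval} then shows this subsystem's matrix has inverse of $2$-norm at most $C_\sigma(|K_1|+\cdots+|K_J|)^{-1}$. Since the smallest singular value of the full $2J\times J$ system dominates that of any of its $J$-row subsystems, $\|e_h^\V\|_{0,\KK(\V)}$ is bounded by $C_\sigma(|K_1|+\cdots+|K_J|)^{-1/2}$ times the right-hand-side norm of the previous paragraph, which is \eqref{eq:lem-error-phvs}.

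I expect the main obstacle to be this last step: verifying that the jump-modified $J\times J$ system is uniformly invertible at an arbitrary nearly singular interior vertex. The whole point of replacing \eqref{eq:LSphv-b} by the jump equations is that the determinant above no longer carries the factor $\sin(\theta_j+\theta_{j+1})$ that wrecks \eqref{eq:sinth12} in the (nearly) singular regime; one has to check that the jump coefficients stay of order $\ell_j$ — so of order $1$ after the $\beta$-rescaling — uniformly in the angle configuration, which is exactly what the explicit constant $-600/\ell_j$ from \eqref{def:sting} furnishes, and one has to keep the bookkeeping of which sting pressures survive $\jump_{\jjp}$ consistent with \eqref{eq:decom-ph3}. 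Everything else is a routine transcription of the regular-vertex (Lemma~\ref{lem:error-sr0}) and boundary-singular-vertex (Lemma~\ref{lem:phv-error-svb}) arguments.
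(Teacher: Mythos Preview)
Your proposal is correct and is precisely the transcription the paper's own (one-sentence) proof calls for: repeat the arguments of Lemmas~\ref{lem:singval}, \ref{lem:error-sr0}, \ref{lem:phv-error-svb} using that every neighbour $\V_j$ is regular by Lemma~\ref{lem:isol-vtx}. The one detail the paper leaves implicit and you make explicit---that the $2\times2$ block built from one sine row and one jump row has determinant $1200\,\ell_1(\sin\theta_1/\ell_2+\sin\theta_2/\ell_0)$, a sum of two positive terms bounded below by shape regularity, in place of the vulnerable $\sin(\theta_1+\theta_2)$ of \eqref{eq:sinth12}---is exactly the reason for swapping \eqref{eq:LSphv-b} for the jump equations.
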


Denote by $\dottwo{\ph}$, the superposition of all the calculated pressures up to now,
that is,
\begin{equation}
\dottwo{\ph} = \ph^{\ns{}{}} + \sum_{\V\in\VV_h'} \ph^\V,
\end{equation}
where $\VV_h'$ is a set of all vertices except nearly singular corner points of $\p\O$.

\subsection{Step 4.  sting pressure for nearly singular  corner}\label{sec:sting-corner}
Let $\V$ be a corner point of $\p\O$ which is nearly singular
and $K_1, K_2,\cdots, K_J$, triangles in $\Th$ sharing $\V$.
If $J\ge 2$, 
we can calculate the least square solution $\ph^\V$
satisfying the system of $2(J-1)$ equations as in \eqref{eq:SSphv-svi}
and estimate its error $(\php)^\V-\ph^\V$ as in \eqref{eq:lem-error-phvs}.

If $J=1$, there exists a triangle $K$ in $\Th$ sharing 
two vertices $\V_0, \V_1$ with $K_1$ as in Figure \ref{fig:sing}-(b). Denote by $\V_2$,
the third vertex of $K$ not shared with $K_1$.

Define the jump of a function $q_h$  across $K_1\cap K$ at $\V_1$ as
\begin{equation}\label{def:jump12n}
  \jump(q_h)
  ={\ell^3}\left(\frac{\p}{\p \n} \left(q_h\big|_{K_1}\right) (\V_1)
    - \frac{\p}{\p \n}\left(q_h\big|_{K}\right) (\V_1)\right),
\end{equation}
where $\n$ is a unit outward normal vector on $K_1\cap K$ of $K_1$ and $\ell$ is the distance
between $\V$ and $K_1\cap K$.
Then we can determine $\ph^\V=\alp\st{\V}{K_1}$ for some constant $\alp$
satisfying
\begin{equation}\label{eq:lastphv}
  \jump(\ph^\V)=-\jump(\dottwo \ph).
\end{equation}
\begin{lemma}\label{lem:phv-error-corner}
  Let $\V$ be a corner which meets only one triangle.
  If $p_h^\V$ is the solution of \eqref{eq:lastphv}, then we estimate
\begin{equation}\label{eq:lem:phv-error-corner-0}
    \|(\Pi_hp)^\V-p_h^\V\|_{0,K_1} \le C_\sigma
    \left(|\u-\u_h|_{1,K_1\cup K} +\|p-\Pi_hp\|_{0,K_1\cup K}\right).
  \end{equation}
\end{lemma}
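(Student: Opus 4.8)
The plan is to mirror the proof of Lemma \ref{lem:phv-error-svb}, now in the degenerate configuration $J=1$ where the single sting unknown $\alp$ is pinned by one jump equation across the interior edge $K_1\cap K$ rather than across $\p K_1$. First I would record that, since $\php$ is continuous across $K_1\cap K$, we have $\jump(\php)=0$, which I rewrite as $\jump\left((\php)^\V\right)=-\jump\left(\php-(\php)^\V\right)$. Subtracting \eqref{eq:lastphv} from this and writing the error as $e_h^\V=(\Pi_hp)^\V-p_h^\V=(\alp^*-\alp)\st{\V}{K_1}$ for the appropriate constant, I obtain the single scalar equation
\begin{equation*}
  \jump(e_h^\V)=-\jump\left(\php-(\php)^\V-\dottwo{p_h}\right).
\end{equation*}

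Next I would compute the left side explicitly. As in \eqref{eq:J12-repre}, the directional derivative of $\st{\V}{K_1}$ at the vertex $\V$ (evaluated at the point $K_1\cap K$, which is at distance $\ell$ from $\V$ along the relevant direction) is a fixed nonzero multiple of $\ell^{-1}$ by the formulas \eqref{def:sting}, so that $\jump(e_h^\V)=c\,\ell^{3}\cdot\ell^{-1}(\alp^*-\alp)\,\|\st{\V}{K_1}\|$-type expression; concretely $|\jump(e_h^\V)|\ge C_\sigma |K_1|^{1/2}\|e_h^\V\|_{0,K_1}$, using \eqref{est:stVK} and $\ell\simeq_\sigma|K_1\cap K|\simeq_\sigma |K_1|^{1/2}$. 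Note here one must check that $\st{\V}{K}$ does not itself contribute to $\jump(\dottwo\ph)$ across $K_1\cap K$, but $\st{\V}{K}$ restricted to the edge $K_1\cap K$ is the edge-opposite cubic which, by the third line of \eqref{def:sting} read on the edge through $\V_1$, is constant on that edge — exactly the same cancellation used in Lemma \ref{lem:phv-error-svb} via \eqref{eq:jump-php-svb}. Hence only the non-sting parts on $K_1,K$ and the sting pressures of $\V_0,\V_1,\V_2$ survive in $\jump\left(\php-(\php)^\V-\dottwo{p_h}\right)$, and $\V_1$ (interior, by Lemma \ref{lem:isol-vtx}, hence regular) and $\V_0,\V_2$ have all been treated in Steps 1--3.

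Then I would bound the right side exactly as in \eqref{eq:jumpeh-svb}--\eqref{est:jumpqh}: writing
\begin{equation*}
  \jump\left(\php-(\php)^\V-\dottwo{p_h}\right)
  =\jump\left((\php)^{\ns{}{}}-\ph^{\ns{}{}}\right)
  +\jump\left((\php)^{\V_1}-p_h^{\V_1}\right)
  +\jump\left((\php)^{\V_0}-p_h^{\V_0}\right)+\jump\left((\php)^{\V_2}-p_h^{\V_2}\right),
\end{equation*}
each term of the form $|\jump(q_h)|\le C_\sigma|K_1\cup K|^{1/2}\|q_h\|_{0,K_1\cup K}$ by the chain of inverse inequalities in \eqref{est:jumpqh}, and then invoking Lemma \ref{lem:ns-error} for the non-sting difference and Lemmas \ref{lem:error-sr0}, \ref{lem:phv-error-svi} (all of $\V_0,\V_1,\V_2$ being regular or at worst interior nearly singular, already handled) for the sting differences. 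Combining the lower bound on $|\jump(e_h^\V)|$ with this upper bound and cancelling the common factor $|K_1|^{1/2}$ yields \eqref{eq:lem:phv-error-corner-0}.

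The main obstacle I anticipate is bookkeeping rather than analysis: one must be careful that the jump functional \eqref{def:jump12n} is taken across the interior edge $K_1\cap K$ \emph{and at the vertex $\V_1$} (not at the corner $\V$), and that the local patch over which the right-hand-side errors are controlled is $K_1\cup K$ — which is legitimate precisely because Assumption \ref{asm:Th} and Lemma \ref{lem:isol-vtx} guarantee $\V_0,\V_1,\V_2$ are not themselves nearly singular corners, so the estimates of Steps 1--3 genuinely apply on this patch. The only genuinely new computation is verifying that the normal derivative of $\st{\V}{K_1}$ at $\V_1$ along $\n$ is bounded below by $C_\sigma \ell^{-1}$; this follows by mapping to $\widehat K$, where $\st{\widehat\V_1}{\widehat K}$ from \eqref{def:sting} has a fixed nonvanishing gradient at the opposite vertices, together with shape-regularity of the affine map.
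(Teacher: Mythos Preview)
Your overall route coincides with the paper's, but the very first step is wrongly justified. You write ``since $\php$ is continuous across $K_1\cap K$, we have $\jump(\php)=0$.'' That reasoning is valid for the \emph{tangential}-derivative jump $\jump_{12}$ of \eqref{def:jump12} used in Lemma~\ref{lem:phv-error-svb}, but the functional $\jump$ in \eqref{def:jump12n} measures the \emph{normal} derivative at $\V_1$, and $C^0$ continuity of a piecewise cubic across an edge says nothing about the normal derivative there. The correct argument (the one in the paper) is that $\php$ is the Hermite interpolant \eqref{def:php}, so $\nabla\php$ is single-valued at the vertex $\V_1$; this is what forces $\jump(\php)=0$.

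The same tangential/normal confusion resurfaces in your remark about ``$\st{\V}{K}$'': first, $\V$ is not a vertex of $K$, so that object does not exist; second, the constancy-on-the-edge cancellation you import from \eqref{eq:jump-php-svb} is specific to tangential jumps and is irrelevant here. No such cancellation is needed: on $K_1\cup K$ one has
\[
  \php-(\php)^\V-\dottwo\ph
  =(\php)^{\ns{}{}}-\ph^{\ns{}{}}
   +\sum_{j=0}^{2}\bigl((\php)^{\V_j}-\ph^{\V_j}\bigr)
   +(\php)^{\CC},
\]
(you omitted the last term; it is piecewise constant, so its $\jump$ vanishes), and one bounds $\jump$ of each summand directly by \eqref{est:jumpqh}-type estimates. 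Finally, your claim that $\V_1$ is interior via Lemma~\ref{lem:isol-vtx} is incorrect: in the corner configuration $\V_0$ and $\V_1$ lie on $\p\O$ (cf.\ Figure~\ref{fig:sing}(b)). What is actually needed, and what the paper invokes, is Assumption~\ref{asm:Th}, which ensures $\V_0,\V_1,\V_2$ are not corners and hence their sting pressures have already been computed in $\dottwo\ph$.
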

\begin{proof}
  We note that $\nabla\php$ is continuous at $\V_1$, since it is a Hermite interpolation of $p$
  in \eqref{def:php}.
It can be written in
\begin{equation}\label{eq:jump-php-w}
  \jump\left( (\php)^\V   \right) =-\jump\left( \php -  (\php)^\V \right).
\end{equation}
Set
the error $e_h^\V=(\php)^\V-\ph^\V=e\st{\V}{K_1}$ for some constant $e$,
then from \eqref{eq:lastphv}, \eqref{eq:jump-php-w}, we have
\begin{equation}
  \label{eq:jump-last-phpph}
  \jump(e\st{\V}{K_1}) = -\jump\left(\php -  (\php)^\V-\dottwo\ph\right).
\end{equation}
From the property of sting functions in \eqref{def:sting}, we have
\begin{equation}
  \label{eq:lem:phv-corner-a}
   \jump(e\st{\V}{K_1})=-180{\ell}^2e.
 \end{equation}
 We have already calculated $\ph^{\V_0}, \ph^{\V_1} $, $\ph^{\V_2}$,
 since $\V_0, \V_1, \V_2$ are not corners  by Assumption \ref{asm:Th}.
Thus the difference  in  \eqref{eq:jump-last-phpph} is
written as
\begin{equation}
  \label{eq::lem:phv-corner-b}
  \php -  (\php)^\V-\dottwo \ph= (\php)^{\ns{}{}}-\ph^{\ns{}{}} +
\sum_{j=0}^2(\php)^{\V_j}-\ph^{\V_j}  + (\php)^C.
\end{equation}

For each function $q_h\in\Mh$,  we can estimate the following as in \eqref{est:jumpqh},
\begin{equation}\label{est:jumpqh2}
 \left| \jump\left( q_h\right)\right| \le C_\sigma \left(|K_1|+|K|\right)^{1/2}\left\|q_h\right\|_{0,K_1\cup K}.
\end{equation}
Now we can reach at \eqref{eq:lem:phv-error-corner-0}
with \eqref{est:stVK}, \eqref{eq:jump-last-phpph}-\eqref{est:jumpqh2} and
Lemma \ref{lem:ns-error}, \ref{lem:error-sr0}-\ref{lem:phv-error-svi}
since $\jump\left( (\php)^C\right)=0$.
\end{proof}

Denote by $\dotthr{\ph}$, the superposition of all the calculated pressures up to now,
\begin{equation}\label{def:dot3p}
\dotthr{\ph} = \ph^{\ns{}{}} + \sum_{\V\in\VV_h} \ph^\V,
\end{equation}
where $\VV_h$ is a set of all vertices in $\Th$. For notation consistency,
denote again
\begin{equation}\label{def:dot3php}
  \dotthr{\php}=\php-(\php)^{\CC}=(\php)^{\ns{}{}} + \sum_{\V\in\VV_h} (\php)^\V.
\end{equation}
Then hereby, we have established the following lemma. 
\begin{lemma}\label{lem:prs-error-0}
\begin{equation*}
    \left\|\dotthr\php-\dotthr\ph\right\|_0
    \le C_\sigma (|\u-\u_h|_1+\|p-\php\|_0).
  \end{equation*}
\end{lemma}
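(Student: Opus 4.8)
The plan is to reduce the global $L^2$ bound to the local estimates already proved in Lemmas \ref{lem:ns-error}, \ref{lem:error-sr0}, \ref{lem:phv-error-svb}, \ref{lem:phv-error-svi} and \ref{lem:phv-error-corner}, using two structural facts. First, the direct sum \eqref{eq:decom-ph3}: on a single triangle $K$ with vertices $\V_1,\V_2,\V_3$, the only summands of $\dotthr\php$ and $\dotthr\ph$ (see \eqref{def:dot3p}, \eqref{def:dot3php}) that do not vanish are the non-sting component of $K$ itself and the sting components of $\V_1,\V_2,\V_3$, since $\ns{h}{}(K')$ is supported in $K'$ and $\st{h}{}(\V)$ in $\KK(\V)$. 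Hence
\[
 \bigl(\dotthr\php-\dotthr\ph\bigr)\big|_K
 = \bigl((\php)^K-\ph^K\bigr)\big|_K
 + \sum_{j=1}^{3} \bigl((\php)^{\V_j}-\ph^{\V_j}\bigr)\big|_K .
\]
Applying the triangle inequality and Cauchy--Schwarz over these four terms, then summing the squares over all $K\in\Th$ and regrouping the sting contributions by vertex (each triangle $K$ is counted by exactly its three vertices, and for each such vertex $\V$ the triangles carrying $\V$ are precisely those forming $\KK(\V)$), I obtain
\[
 \|\dotthr\php-\dotthr\ph\|_0^2
 \le 4\sum_{K\in\Th}\|(\php)^K-\ph^K\|_{0,K}^2
 + 4\sum_{\V\in\VV_h}\|(\php)^{\V}-\ph^{\V}\|_{0,\KK(\V)}^2 .
\]

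Next I would insert the local bounds. Lemma \ref{lem:ns-error} controls the first sum by $C_\sigma^2\sum_{K}(|\u-\u_h|_{1,K}^2+\|p-\php\|_{0,K}^2)=C_\sigma^2(|\u-\u_h|_1^2+\|p-\php\|_0^2)$, since $\Th$ partitions $\O$. For the second sum I would split the vertices into cases: regular vertices (Lemma \ref{lem:error-sr0}, including the boundary variant noted after its proof), boundary nearly singular vertices that are not corners (Lemma \ref{lem:phv-error-svb}), interior nearly singular vertices (Lemma \ref{lem:phv-error-svi}), and nearly singular corners (the $J\ge 2$ argument of Section \ref{sec:sting-corner} and, for $J=1$, Lemma \ref{lem:phv-error-corner}). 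In every case the estimate is $C_\sigma$ times $|\u-\u_h|_{1,S(\V)}+\|p-\php\|_{0,S(\V)}$, where $S(\V)$ is either $\KK(\V)$ or, in the $J\le 2$ corner/boundary situations, $\KK(\V)$ enlarged by one shape-regular neighbouring triangle. Summing the squares over $\V$ and using that each triangle of $\Th$ belongs to at most $C_\sigma$ of these patches $S(\V)$, the overlap collapses: $\sum_{\V}|\u-\u_h|_{1,S(\V)}^2\le C_\sigma|\u-\u_h|_1^2$ and likewise for $\|p-\php\|_0$. Combining the two estimates and taking square roots yields the claim.

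The main obstacle I expect is the bookkeeping of that overlap constant in the exceptional patches: the nearly singular boundary vertices and corners whose error bound reaches into a triangle outside $\KK(\V)$. I would argue that, thanks to Lemma \ref{lem:isol-vtx} (nearly singular vertices are isolated) and Assumption \ref{asm:Th}, only finitely many such enlarged patches—bounded by a function of $\sigma$ alone via shape regularity—can meet a given triangle, so the overlap stays $C_\sigma$. Beyond this, the argument is the standard "sum of finite-overlap local estimates'' pattern and requires no further analysis.
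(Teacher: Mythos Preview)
Your proposal is correct and matches the paper's approach: the paper offers no explicit proof beyond the sentence ``Then hereby, we have established the following lemma,'' treating the result as an immediate aggregation of Lemmas \ref{lem:ns-error}, \ref{lem:error-sr0}, \ref{lem:phv-error-svb}, \ref{lem:phv-error-svi}, \ref{lem:phv-error-corner}. Your write-up supplies exactly the finite-overlap summation that makes this rigorous, including the care about slightly enlarged patches for the exceptional boundary and corner cases.
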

\subsection{Step 5. piecewise constant pressure}\label{sec:con-prs}
Let $Y_h$ be a space of all functions in $\left[\mathcal{P}_h^2(\O)\cap H_0^1(\O)\right]^2$
vanishing at all vertices and whose value at the midpoint of each edge $E$
is normal to $E$.
Define
\[X_h=\left[\mathcal{P}_h^1(\O)\cap H_0^1(\O)\right]^2 \bigoplus Y_h.\]
 Then, there exists a unique
$(\w_h, \ph^{c})\in X_h\times\mathcal{P}_h^0(\O)\cap L_0^2(\O)$ which satisfies
a following discrete Stokes problem:
\begin{equation}\label{eq:Stokes2}
  \begin{array}{rll}
  (\nabla\w_h, \nabla\v_h) + (\ph^{c},\div\v_h)
    &=& (\f,\v_h)-(\nabla\u_h,\nabla\v_h)-(\dotthr\ph,\div\v_h),\\
    (q_h,\div\w_h)&=&0,
  \end{array}
\end{equation}
for all $(\v_h,q_h)\in  X_h\times\mathcal{P}_h^0(\O)\cap L_0^2(\O)$,
since $X_h\times \mathcal{P}_h^0(\O)\cap L_0^2(\O)$ satisfies the inf-sup condition
 \cite{Bernardi1985, Brezzi-Fortin}.

Denote by $\m(f)$, the average of a function $f$ over $\O$, then define
\begin{equation}\label{def:phc}
  \ph^{\CC}=\ph^c -\m(\dotthr\ph).
\end{equation}
\begin{lemma}\label{lem:prs-error-1}
  \begin{equation}\label{eq:th:prs-error-1}
    \|(\php)^{\CC}-\ph^{\CC}\|_0 \le  C_\sigma (|\u-\u_h|_1+\|p-\php\|_0).
  \end{equation}
\end{lemma}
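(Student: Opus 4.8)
The plan is to bound $\|(\php)^{\CC}-\ph^{\CC}\|_0$ by first reducing the claim to an estimate on $\w_h$, the auxiliary velocity solving \eqref{eq:Stokes2}, and then invoking the inf-sup stability of the pair $X_h\times\mathcal{P}_h^0(\O)\cap L_0^2(\O)$. First I would record what $(\php)^{\CC}$ satisfies: subtracting the sting and non-sting components from \eqref{eq:pihp0} and testing against $\v_h\in X_h$, one gets (since $\div\v_h$ has vanishing mean on each triangle for $\v_h\in Y_h$, while for $\v_h\in[\mathcal{P}_h^1\cap H_0^1]^2$ the sting and non-sting pieces interact only through quantities controlled by Lemma~\ref{lem:prs-error-0}) a discrete Stokes-type identity for $(\php)^{\CC}$ of the same shape as \eqref{eq:Stokes2} but with $\dotthr\php$ in place of $\dotthr\ph$, $\u$ in place of $\u_h$, and an extra consistency term $(p-\php,\div\v_h)$. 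Subtracting this from \eqref{eq:Stokes2} yields, for all $\v_h\in X_h$ and $q_h\in\mathcal{P}_h^0\cap L_0^2$,
\begin{equation}\label{eq:plan-diff}
  \begin{array}{rll}
  (\nabla(\w_h-\widetilde\w_h),\nabla\v_h)+\big((\ph^c-(\php)^{\CC}-\m(\cdots)),\div\v_h\big)
    &=& -(\nabla(\u-\u_h),\nabla\v_h)-(\dotthr\ph-\dotthr\php,\div\v_h)-(p-\php,\div\v_h),\\
  (q_h,\div(\w_h-\widetilde\w_h))&=&0,
  \end{array}
\end{equation}
where $\widetilde\w_h$ is the corresponding auxiliary velocity for the continuous data.

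Next I would run the standard saddle-point argument on \eqref{eq:plan-diff}: testing the first line with $\v_h=\w_h-\widetilde\w_h$ and using the second line to kill the pressure term gives $|\w_h-\widetilde\w_h|_1\le C_\sigma(|\u-\u_h|_1+\|\dotthr\ph-\dotthr\php\|_0+\|p-\php\|_0)$, and then Lemma~\ref{lem:prs-error-0} absorbs the middle term so that $|\w_h-\widetilde\w_h|_1\le C_\sigma(|\u-\u_h|_1+\|p-\php\|_0)$. Feeding this back into the first line of \eqref{eq:plan-diff} and applying the inf-sup condition to the pressure difference $\ph^c-(\php)^{\CC}-\text{const}$ bounds its $L^2$-norm by the same right-hand side. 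Finally, because both $\ph^{\CC}=\ph^c-\m(\dotthr\ph)$ and $(\php)^{\CC}$ have zero mean over $\O$ (the latter by its definition as the $\mathcal{P}_h^0\cap L_0^2$ component in \eqref{eq:decom-pihp}, after noting $\m(\php)=\m(p)=0$), the undetermined additive constant is pinned down, and one concludes $\|(\php)^{\CC}-\ph^{\CC}\|_0\le C_\sigma(|\u-\u_h|_1+\|p-\php\|_0)$, which is \eqref{eq:th:prs-error-1}.

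The main obstacle I anticipate is the first bullet above: verifying that $(\php)^{\CC}$ genuinely satisfies a clean discrete Stokes identity over $X_h$. The subtlety is that when we test \eqref{eq:pihp0} against $\v_h\in X_h$, the sting and non-sting components $(\php)^{\ns{}{}}+\sum_\V(\php)^\V=\dotthr\php$ do \emph{not} pair trivially with $\div\v_h$ — unlike the special test functions used in Steps 1--4, a general $\v_h\in X_h$ does not annihilate them — so the term $(\dotthr\php,\div\v_h)$ appears explicitly, which is exactly why it sits on the right-hand side of \eqref{eq:plan-diff} and why Lemma~\ref{lem:prs-error-0} is invoked to control $\dotthr\ph-\dotthr\php$. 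One must also check that $\m(\dotthr\ph)$ and $\m(\dotthr\php)$ are the correct constants so that after the shift in \eqref{def:phc} the zero-mean normalization matches $(\php)^{\CC}$; this is a bookkeeping point but needs care since $\ph^c$ and $(\php)^{\CC}$ are only defined up to their means a priori. Everything else is the textbook inf-sup estimate plus the already-proved Lemmas \ref{lem:ns-error}, \ref{lem:error-sr0}--\ref{lem:phv-error-corner} packaged into Lemma~\ref{lem:prs-error-0}.
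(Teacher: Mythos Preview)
Your overall strategy---derive a pressure identity for $(\php)^{\CC}$ from \eqref{eq:pihp0}, subtract it from \eqref{eq:Stokes2}, invoke the inf-sup condition for $X_h\times\mathcal{P}_h^0(\O)\cap L_0^2(\O)$, and absorb $\dotthr\php-\dotthr\ph$ via Lemma~\ref{lem:prs-error-0}---matches the paper's argument. (The paper dispenses with your auxiliary $\widetilde\w_h$ by decomposing $X_h$ into the discretely divergence-free part $W_h$ and its orthogonal complement $W_h^\perp$, so that the $\w_h$-term disappears when testing against $\z_h\in W_h^\perp$; this is equivalent to observing $\widetilde\w_h=0$ in your setup.)

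There is, however, a genuine gap in your final normalization step. You assert that $(\php)^{\CC}$ has zero mean ``by its definition as the $\mathcal{P}_h^0\cap L_0^2$ component in \eqref{eq:decom-pihp}, after noting $\m(\php)=\m(p)=0$''. Both claims are false. First, the decomposition \eqref{eq:decom-ph3} places $(\php)^{\CC}$ in $\mathcal{P}_h^0(\O)$, not in $\mathcal{P}_h^0(\O)\cap L_0^2(\O)$; the sting and non-sting components have no reason to be mean-zero on each triangle (indeed $\int_K\st{\V}{K}=|K|$ by \eqref{eq:quadst} with $q=1$), so $(\php)^{\CC}$ absorbs whatever constant is left over. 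Second, $\php$ is a Hermite interpolant of $p$, and Hermite interpolation does not preserve the mean, so $\m(\php)\neq 0$ in general. Likewise $\m(\ph^{\CC})=\m(\ph^c)-\m(\dotthr\ph)=-\m(\dotthr\ph)$ need not vanish. Thus your ``pinning down the undetermined additive constant'' step does not go through as written.

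The fix is exactly the computation the paper carries out: write $(\php)^{\CC}-\ph^{\CC}=e_h^{\CC}-\m(e_h^{\CC})+\m(\php)-\m(\dotthr\php)+\m(\dotthr\ph)$ with $e_h^{\CC}=(\php)^{\CC}-\ph^c$, bound $\|e_h^{\CC}-\m(e_h^{\CC})\|_0$ via the inf-sup argument you outlined, and then control the residual constants directly: $|\m(\php)|=|\m(\php-p)|\le|\O|^{-1/2}\|p-\php\|_0$ and $|\m(\dotthr\php-\dotthr\ph)|\le|\O|^{-1/2}\|\dotthr\php-\dotthr\ph\|_0$, the latter handled by Lemma~\ref{lem:prs-error-0}. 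So the means are not zero, but they are of the right order, and the bookkeeping you flagged as ``needing care'' is precisely where your argument currently breaks.
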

\begin{proof}
 From \eqref{eq:pihp0}, \eqref{def:dot3php}, we have for all $\v_h\in X_h$,
  \begin{equation}
    \label{eq:Stokes0-a}
    \left((\php)^{\CC},\div\v_h\right)=(\f,\v_h)-(\nabla\u,\nabla\v_h)-(p-\Pi_hp,\div\v_h)
    -\left(\dotthr{\php},\div\v_h\right).
  \end{equation}
  Let's decompose $X_h$ as 
  \begin{equation}\label{eq:decom-Xh}
    \arraycolsep=1.4pt\def\arraystretch{1.6}
    \begin{array}{lll}
      W_h&=&\{\x_h\in X_h\ |\ (q_h,\div\x_h)=0 \mbox{ for all } q_h\in \mathcal{P}_h^0(\O)\cap L_0^2(\O)\},\\
   W_h^{\perp}&=&\{\z_h\in X_h\ |\ (\nabla\x_h,\nabla\z_h)=0\ \mbox{ for all }
    \x_h\in W_h\}. 
    \end{array}
  \end{equation}

  Then, we can rewrite \eqref{eq:Stokes2} for all  $\z_h\in W_h^{\perp}$,
  \begin{equation}\label{eq:Stoeks2-a}
    (\ph^c,\div\z_h)=(\f,\z_h)-(\nabla\u_h,\nabla\z_h)-(\dotthr\ph,\div\z_h). 
  \end{equation}
  Denote the error by $e_h^{\CC}=(\php)^{\CC}-\ph^{c}$.
  Then from \eqref{eq:Stokes0-a}, \eqref{eq:Stoeks2-a}, we have  for all  $\z_h\in W_h^{\perp}$,
  \begin{equation}\label{eq:diff-const}
  \left(e_h^{\CC},\div\z_h \right)=-(\nabla\u-\nabla\u_h,\nabla\z_h)-(p-\Pi_hp,\div\z_h)
    -\left(\dotthr{\php}-\dotthr\ph,\div\z_h\right).
  \end{equation}
 Since $e_h^{\CC}-\m(e_h^{\CC})\in L_0^2(\O)$, there exists a nonzero $\v_h\in X_h$ such that
  \begin{equation}
    \label{eq:inf-sup-p2p0}
    \|e_h^{\CC}-\m(e_h^{\CC})\|_0\ |\v_h|_1 \le \beta (e_h^{\CC}-\m(e_h^{\CC}),\div\v_h),
  \end{equation}
from the inf-sup condition, where $0<\beta$ is a constant, independent of $\Th$.

  If we decompose $\v_h$  orthogonally  into $\x_h\in W_h$ and $\z_h\in W_h^\perp$ such that
  \begin{equation}
    \label{eq:decom-vhxhzh}
    \v_h=\x_h + \z_h,
  \end{equation}
  from \eqref{eq:diff-const}-\eqref{eq:decom-vhxhzh} and Lemma \ref{lem:prs-error-0}, we expand
  \begin{multline} \label{eq:last-expan}
    \|e_h^{\CC}-\m(e_h^{\CC})\|_0\ |\z_h|_1 \le  \|e_h^{\CC}-\m(e_h^{\CC})\|_0\ |\v_h|_1\le \beta (e_h^{\CC}-\m(e_h^{\CC}),\div\v_h)\\
    =\beta (e_h^{\CC}-\m(e_h^{\CC}),\div\z_h)=\beta (e_h^{\CC},\div\z_h)
    \le  \beta C_\sigma(|\u-\u_h|_1+\|p-\php\|_0)|\z_h|_1.
  \end{multline}
  If $z_h=\mathbf{0}$, then $e_h^{\CC}-\m(e_h^{\CC})=0$ from  \eqref{eq:inf-sup-p2p0}, \eqref{eq:decom-vhxhzh}.  Therefore, from \eqref{eq:last-expan}, we have
  \begin{equation}
    \label{est:eh-meh}
     \|e_h^{\CC}-\m(e_h^{\CC})\|_0 \le  C_\sigma(|\u-\u_h|_1+\|p-\php\|_0).
  \end{equation}

  Now, since $\m(e_h^{\CC})=\m\left((\php)^{\CC}\right)$, we expand from
  \eqref{def:dot3p}, \eqref{def:dot3php}, \eqref{def:phc} that
  \begin{equation}
    \label{eq:last-huddle}\arraycolsep=1.4pt\def\arraystretch{1.6}
    \begin{array}{lll}
      (\php)^{\CC}-\ph^{\CC} &=&    (\php)^{\CC}-\left(\ph^{c}-\m(\dotthr\ph)\right)
           =e_h^{\CC}-\m(e_h^{\CC})+\m\left((\php)^{\CC}\right) +\m(\dotthr\ph)\\
                                                     &=& e_h^{\CC}-\m(e_h^{\CC}) + \m(\php)-\m\left(\dotthr\php\right)+\m\left(\dotthr\ph \right).
    \end{array}
  \end{equation}
  For the averages in \eqref{eq:last-huddle}, we observe the following two basic inequalities:
  \begin{equation}\label{est:mean-php}
    |\m(\php)|=|\m(\php-p)|
    \le |\O|^{-1/2}\|\php-p\|_0,
  \end{equation}
  \begin{equation}\label{est:mean-phpph}
    \left|\m\left(\dotthr\php\right)-\m\left(\dotthr\ph \right)\right|=\left|\m\left(\dotthr\php-\dotthr\ph\right)\right|\le  |\O|^{-1/2}\|\dotthr\php-\dotthr\ph\|_0.
  \end{equation}
  Therefore, \eqref{eq:th:prs-error-1} is established from
  \eqref{est:eh-meh}-\eqref{est:mean-phpph} and Lemma \ref{lem:prs-error-0}.
\end{proof}

Now  we reach at Theorem \ref{th:prs-error} with the definitions \eqref{def:ph}, \eqref{def:dot3p}, \eqref{def:dot3php} and  Lemma \ref{lem:prs-error-0}, \ref{lem:prs-error-1} and Theorem \ref{th:vel-error}.

\section{Numerical test}
We tested the suggested successive method in $\O=[0,1]^2$ with the velocity $\u$
and pressure $p$ such that
\[ \u=\left(s(x)s'(y), -s'(x)s(y)\right),\quad
  p=\sin(4\pi x)e^{\pi y},\quad\mbox{ where }s(t)=(t^2-t)\sin(2\pi t).  \]

For triangulations, we first formed the meshes of uniform squares
over $\O$, then added one exactly singular vertex in every square.
An example of $8\times 8\times 4$ mesh is given in Figure \ref{fig:mesh}.

We have calculated the successive pressures
$\ph^{\ns{}{}}, \dotone\ph, \dottwo\ph$ and $\ph^{\CC}$
along to Step 1,2,3,5 in Section \ref{sec:main}.
Since the meshes have not any singular corner, $\dotthr\ph=\dottwo\ph$.
Their examples are depicted in Figure \ref{fig:ph} as well as
$\ph=\dotthr\ph + \ph^{\CC}$.

The error table in Table \ref{table} shows the optimal order of convergence,
expected in Theorem \ref{th:vel-error} and \ref{th:prs-error}.
We adopted a direct linear solver in {\texttt {LAPACK}}
on solving the systems from the problem \eqref{eq:dclv-SC} and \eqref{eq:Stokes2}
to focus on testing the suggested method.

\begin{figure}[ht]
  \centering
  \includegraphics[width=0.35\linewidth]{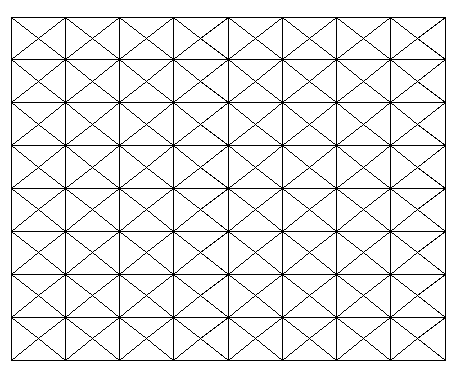}
\caption{ $8\times8\times4$ mesh bearing $8\times8$ interior exactly singular vertices}
\label{fig:mesh}
\end{figure}

\begin{figure}[ht]
 \subfloat[$\ph^{\ns{}{}}$]
 {\includegraphics[width=0.49\textwidth]{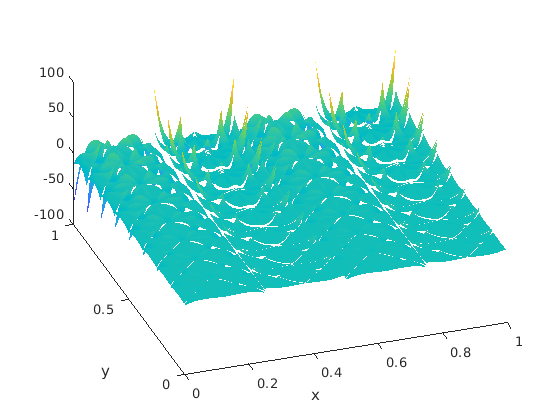}}
  \subfloat[$\dotone\ph=\ph^{\ns{}{}}+\sum \ph^{\V_r}$]
  {\includegraphics[width=0.49\textwidth]{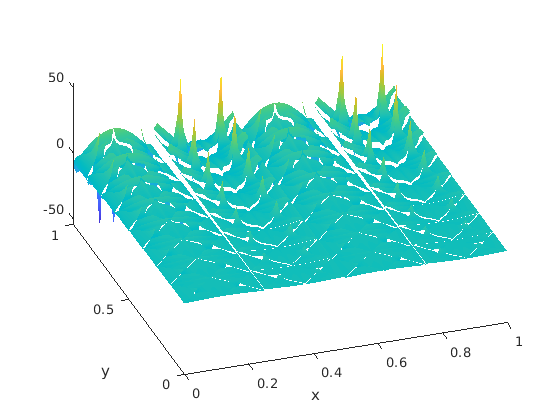}}\\
  \subfloat[$\dottwo\ph=\dotone\ph+\sum \ph^{\V_s},\quad \dotthr\ph=\dottwo\ph$]
  {\includegraphics[width=0.49\textwidth]{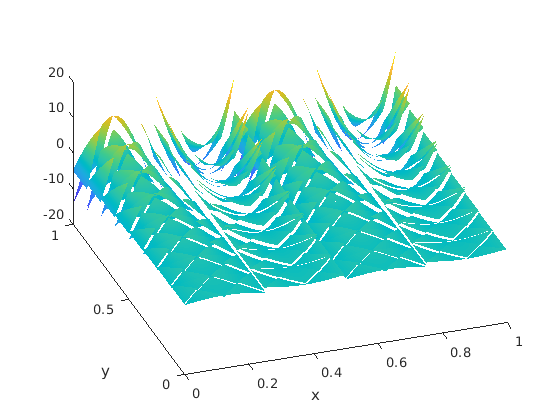}}
  \subfloat[$\ph^{\CC}$ ]
  {\includegraphics[width=0.49\textwidth]{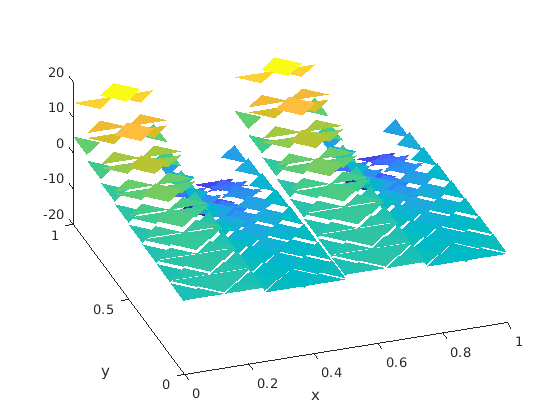}}\\
  \centering \subfloat[$\ph=\dotthr\ph + \ph^{\CC}$]
 {\includegraphics[width=0.49\textwidth]{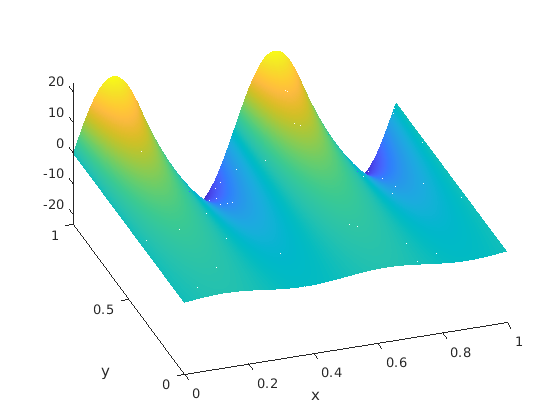}}
 \caption{successive pressures calculated in $8\times8\times4$ mesh in Figure \ref{fig:mesh}}
  \label{fig:ph}
\end{figure}

\begin{table}
  \centering
\begin{tabular}{r || c c || c c }
\hline
  mesh\hspace{3mm}
  & \hspace{1mm}$|\u-\u_h|_1$\hspace{1mm} & order\hspace{1mm}
     & \hspace{1mm}$\|p-p_h\|_0$\hspace{1mm}
  & order \\
\hline
4 x 4 x 4 & 1.1264E-2  & &  5.8000E-2 & \\ 
 8 x 8 x 4 & 6.1498E-4  &4.1950  &2.7012E-3  &  4.4244  \\ 
 16 x 16 x 4 &  3.5942E-5 &4.0968  & 1.6760E-4   &4.0105  \\
 32 x 32 x 4 &2.2002E-6  &4.0299  &  1.0454E-5  &  4.0029 \\ 
\hline
\end{tabular}
\caption{\label{table}error table}
\end{table}

\end{document}